\newtheorem{Theorem}{Theorem}
\newtheorem{Proposition}{Proposition}
\newtheorem{Lemma}{Lemma}
\newtheorem{Corollary}{Corollary}
\theoremstyle{definition}
\newtheorem{Example}{Example}
\newenvironment{newlist}
   {\begin{list}{}{\setlength{\labelsep}{0.15cm}
\setlength{\itemsep}{0cm}
\setlength{\topsep}{0.15cm}
                   \setlength{\labelwidth}{0.7cm}
                      \setlength{\leftmargin}{0.75cm}}}  
   {\end{list}}
\newcommand{\setc}{\mathnormal{\setminus}}
\newcommand{\de}{\delta}
\newcommand{\e}{e}
\newcommand{\m}{\mathbf} 
\newcommand{\R}{\mathbb{R}}
\newcommand{\Z}{\mathbb{Z}}
\newcommand{\N}{\mathbb{N}}
\newcommand{\iv}[1]{{#1}^{-1}}
\newcommand{\Si}{\mathrm{\Sigma}}
\newcommand{\Om}{\mathrm{\Omega}}
\newcommand{\vty}[1]{\mathcal{#1}}
\newcommand{\eq}{\approx}
\newcommand{\fr}[1]{#1}
\newcommand{\qr}[1]{\overline{#1}}
\newcommand{\subg}[1]{\mathsf{B}({#1})}
\newcommand{\subgen}[2]{\mathsf{B}_{#2}({#1})}
\newcommand{\subrg}[1]{\mathsf{R}(#1)}
\newcommand{\subrgen}[2]{\mathsf{R}_{#2}({#1})}
\newcommand{\sgr}[1]{\langle #1 \rangle} 
\newcommand{\nsgr}[1]{{\langle\!\langle #1 \rangle\!\rangle}}
\begin{document}

\begin{frontmatter}

\title{Ordering groups and validity in lattice-ordered groups}

\author{Almudena Colacito\fnref{thanks}}
\address{Mathematical Institute, University of Bern, Sidlerstrasse 5, 3012 Bern, Switzerland}
\ead{almudena.colacito@math.unibe.ch}

\author{George Metcalfe\corref{cor}\fnref{thanks}}
\address{Mathematical Institute, University of Bern, Sidlerstrasse 5, 3012 Bern, Switzerland}
\ead{george.metcalfe@math.unibe.ch}
\cortext[cor]{Corresponding author}
\fntext[thanks]{Supported by Swiss National Science Foundation grant 200021{\_}165850 and the EU Horizon 2020 research and innovation programme under the Marie Sk{\l}odowska-Curie grant agreement No 689176.}	

\begin{abstract}
A characterization is given of the subsets of a group that extend to the positive cone of a right order on the group and used to relate validity of equations in lattice-ordered groups ($\ell$-groups) to subsets of free groups that extend to positive cones of right orders. This correspondence is used to obtain new proofs of the decidability of the word problem for free $\ell$-groups and generation of the variety of $\ell$-groups by the $\ell$-group of automorphisms of the real number line. A characterization of the subsets of a group that extend to the positive cone of an order on the group is also used to establish a correspondence between the validity of equations in varieties of representable $\ell$-groups (equivalently, classes of ordered groups) and subsets of relatively free groups that extend to positive cones of orders.
 \end{abstract}

\begin{keyword}
Ordered groups\sep Free groups\sep Lattice-ordered groups\sep Decidability
\MSC 
03B25, 
03C05, 
06F05, 
06F15, 
20E05 
\end{keyword}

\end{frontmatter}
 

\section{Introduction}\label{s:introduction}

The first aim of this paper is to establish a correspondence between validity of equations in lattice-ordered groups ($\ell$-groups) and subsets of free groups that extend to positive cones of right orders on the group, thereby relating validity in $\ell$-groups also to properties of the topological spaces of right orders on free groups. This correspondence is used to obtain new proofs of the decidability of the word problem for free $\ell$-groups and generation of the variety of $\ell$-groups by the $\ell$-group of automorphisms of the real number line. A correspondence is also established between validity of equations in varieties of representable $\ell$-groups (equivalently, classes of ordered groups) and subsets of relatively free groups that extend to positive cones of orders on the group. Our main tools for proving these results will be ordering theorems for groups that stem from proof-theoretic investigations into $\ell$-groups, and require very little structural theory for these algebras.

Recall first, referring to~\cite{KM94} for further details and references, that an $\ell$-group is an algebraic structure $\m L = \langle L, \land, \lor, \cdot, \iv{}, \e \rangle$  such that  $\langle L,\cdot, \iv{}, \e \rangle$ is a group and $\langle L, \land, \lor \rangle$ is a lattice with an order $a \le b \,:\Leftrightarrow\,a \land b = a$ that is compatible with left and right multiplication. If $\le$ is also total, then $\m L$ is  an ordered group (o-group). The class $\vty{LG}$ of all $\ell$-groups forms a variety, and the class of o-groups generates the variety $\vty{RG}$ of representable $\ell$-groups.  By a fundamental theorem of Holland, every $\ell$-group embeds into the group $\m{Aut}(\langle \Om, \le \rangle)$ of order-preserving bijections of a totally-ordered set $\langle \Om,\le\rangle$ equipped with coordinate-wise lattice operations~\cite{Hol63}. This representation was used by Holland to prove that $\vty{LG}$  is generated as a variety by $\m{Aut}(\langle \R,\le \rangle)$~\cite{Hol76}  and, with McCleary, to establish the decidability of the word problem for free $\ell$-groups~\cite{HM79}. Alternative proofs of these theorems that avoid the use of Holland's embedding theorem and sharpen the decidability result to co-NP completeness were given by Galatos and Metcalfe~\cite{GM16}; this approach forms part of a broader program that aims to develop relationships between $\ell$-groups and varieties of residuated lattices (see also~\cite{BKLT16,GLPT16,GLT17}).  

Recall next that a partial order $\le$ on a group $\m{G}$ with neutral element $\e$ is  a partial right order of $\m{G}$ if it is compatible with right multiplication in $\m{G}$, and a  right order if it is also total. The positive cone $P_\le = \{a \in G \mid \e < a\}$ of a partial right order of $\m{G}$ is always a subsemigroup of $\m{G}$ (i.e., $a,b \in P_\le$ implies $ab \in P_\le$) that omits $\e$. Conversely, if $P \subseteq G$ is a subsemigroup of $\m{G}$ omitting $\e$, then $a \le^P b\,:\Leftrightarrow\, b\iv{a} \in P \cup \{\e\}$ defines a partial right order of $\m{G}$ satisfying $P_{\le^P} = P$. Hence from now on, we will identify partial right orders of $\m{G}$ with subsemigroups of $\m{G}$ that omit $\e$, and right orders with partial right orders $P$ such that $a \in P$ or $\iv{a} \in P$ for all $a \in G \setc \{\e\}$. In particular, each right order of $\m{G}$ can be viewed as a subset of $G$ and the set $\mathcal{RO}(\m{G})$ of right orders of $\m{G}$ endowed with the subspace topology inherited from the powerset $2^G$ with the product topology forms a compact totally disconnected topological space. This topological perspective on right orders is explored in detail in~\cite{CR16,deroin2014groups}. 

We prove here that a finite subset $\{\fr{t}_1,\ldots,\fr{t}_n\}$ of a free group $\m{F}$ extends to a right order if and only if the inequation $\e \le t_1 \lor \dots \lor t_n$ fails in some $\ell$-group (Theorem~\ref{t:secondmainright}). Since every $\ell$-group term is equivalent in $\vty{LG}$ to a term of the form $\bigwedge_{i \in I} \bigvee_{j \in J_i} t_{ij_i}$ where each  $t_{ij_i}$ is a group term, this correspondence provides a full characterization of validity in  $\vty{LG}$. The result may be established using Hollister's theorem~\cite{holl65} that the lattice order of an $\ell$-group is the intersection of right orders on its group reduct. However, we  use here instead an inductive characterization of subsets of groups that extend to right orders (Theorem~\ref{t:mainrightorderthm}, closely related to a theorem of Conrad~\cite{Con59}), to obtain a proof that requires almost no structure theory of $\ell$-groups. We then make use of the correspondence to obtain new proofs of the generation of $\vty{LG}$ as a variety by $\m{Aut}(\langle \R,\le \rangle)$ and the decidability of the word problem for free $\ell$-groups, the latter by appealing to an algorithm by Clay and Smith  that recognizes when a given finite subset of a finitely generated free group extends to a right order~\cite{CS09}. 

In the last part of the paper we turn our attention to validity of equations in varieties of representable $\ell$-groups (equivalently, classes of o-groups). A partial order $\le$ on $G$ that is compatible with left and right multiplication in $\m{G}$ is called a partial order of $\m{G}$, and an order if it is total. Positive cones of partial orders of  $\m{G}$ then correspond to normal (i.e., closed under conjugation) subsemigroups of $\m{G}$ omitting $\e$. We provide an inductive characterization of subsets of groups that extend to orders (Theorem~\ref{t:mainorderthm}, closely related  to a theorem of Ohnishi~\cite{OM52}), and use this to establish that a finite subset $\{\fr{t}_1,\ldots,\fr{t}_n\}$ of a relatively free group of a variety $\vty{V}$ of groups extends to an order if and only if the inequation $\e \le t_1 \lor \dots \lor \,t_n$ fails in some o-group with group reduct in $\vty{V}$ (Theorem~\ref{t:secondmain}). 

Let us remark finally that a proof-theoretic account of some of the results presented here is given in the conference paper~\cite{CM17}. Note also that Wessel in~\cite{Wes17} develops an alternative syntactic approach to orderability of groups that yields a new proof of a finitary version of Sikora's theorem.


\section{A right ordering condition for groups}\label{s:2}

Let us fix a group $\m{G}$ with neutral element $\e$ and denote by $\sgr{S}$ the subsemigroup of $\m{G}$ generated by $S \subseteq G$. Clearly, $\sgr{S}$ is a partial right order of $\m{G}$ if and only if $\e \not \in\sgr{S}$. The following characterization of the subsets of $G$ that extend to right orders of $\m{G}$ is proved by a straightforward application of Zorn's lemma (see~\cite{BR77}):

\begin{newlist}
\item[$(\dagger)$]
 $S\subseteq G$ extends to a right order of $\m{G}$ if and only if for all $a_1,\ldots,a_n \in G\setc\{\e\}$, there exist $\de_1,\ldots,\de_n \in \{-1,1\}$ satisfying $\e \not\in \sgr{S \cup \{a_1^{\de_1},\ldots,a_n^{\de_n}\}}$.
\end{newlist}

\noindent
We make use here, however, of an alternative inductive description of these sets (similar to characterizations by Ohnishi~\cite{OM52} for orderable groups, and Conrad~\cite{Con59} for right-orderable groups) that is more suitable for establishing relationships with validity in $\ell$-groups. We define inductively for $n \in \N$,
\[
\begin{array}{rcl}
\subrgen{\m{G}}{0} & = & \{S \subseteq G \mid S \cap S^{-1} \neq \emptyset\};\\[.1in]
\subrgen{\m{G}}{n+1} & = & \subrgen{\m{G}}{n} \cup \{T \cup \{ab\} \mid T \cup \{a\}, T \cup \{b\} \in \subrgen{\m{G}}{n}\};\\[.1in]
\subrg{\m{G}} & =  &\bigcup_{n \in \N} \subrgen{\m{G}}{n}.
\end{array}
\]
It follows that $\subrg{\m{G}} \subseteq \mathcal{P}(G)$ is the smallest set containing $\subrgen{\m{G}}{0}$ satisfying the condition that whenever $T \cup \{a\}, T \cup \{b\}  \in \subrg{\m{G}}$, also $T \cup \{ab\} \in \subrg{\m{G}}$. Since any $S \subseteq G$ occurring in $\subrg{\m{G}}$ must occur in $\subrgen{\m{G}}{n}$ for some $n \in \N$, there exists in this case a finite tree of subsets of $G$ with root $S$ and leaves in $\subrgen{\m{G}}{0}$ such that each non-leaf node is of the form $T \cup \{ab\}$ and is obtained from its parent nodes $T \cup \{a\}, T \cup \{b\}$ using the above condition. 

\begin{Example}
Let $\m{Z}$ be the additive group of the integers. Then $\{1,-1\} \in \subrgen{\m{Z}}{0}$, so $\{1,-2\} \in  \subrgen{\m{Z}}{1}$. But $\{2,-2\} \in  \subrgen{\m{Z}}{1}$, so $\{3,-2\} \in  \subrgen{\m{Z}}{2}$, and since also $\{3,-3\} \in  \subrgen{\m{Z}}{2}$, it follows that  $\{3,-5\} \in \subrgen{\m{Z}}{3} \subseteq \subrg{\m{Z}}$. This chain of reasoning can be displayed as a tree of finite sets of integers as follows:
\[
\infer{\{3,-5\}}{
 \infer{\{3,-3\}}{} &
 {\infer{\{3,-2\}}{
  {\infer{\{2,-2\}}{}} &
{\infer{\{1,-2\}}{
    {\infer{\{1,-1\}}{}} &
     {\infer{\{1,-1\}}{}}}}}}}
\]
It is easily proved that  $S \subseteq \Z$ is in $\subrg{\m{Z}}$ if and only if $S$ contains elements $m \le 0$ and $n \ge 0$. This corresponds to the fact that $S \subseteq \Z$ extends to a (right) order of $\m{Z}$ (of which there are just two, the standard one and its dual) if and only if it contains only strictly positive elements or only strictly negative elements (see Theorem~\ref{t:mainrightorderthm}).
\end{Example}

\begin{Example}
Let $\m{F}(2)$ denote the free group on two generators $x$ and $y$. The following tree of subsets of $F(2)$ demonstrates that $\{xx, yy, \iv{x}\iv{y}\} \in \subrg{\m{F}(2)}$:
\[
 \infer{\{xx, yy, \iv{x}\iv{y}\}}{
  \infer{\{xx, yy, \iv{x}\}}{
   \infer{\{x, yy, \iv{x}\}}{}
   &
   \infer{\{x, yy, \iv{x}\}}{}}
   &
    \infer{\{xx, yy, \iv{y}\}}{
  \infer{\{xx, y, \iv{y}\}}{}
  &
\infer{\{xx, y, \iv{y}\}}{}}}
\]
This corresponds to the fact that $\{xx, yy, \iv{x}\iv{y}\}$ does not extend to a right order of $\m{F}(2)$ (see Theorem~\ref{t:mainrightorderthm}) and also the fact that the inequation $\e \le xx \lor yy \lor \iv{x}\iv{y}$ is valid in all $\ell$-groups (see Theorem~\ref{t:secondmainright}).
\end{Example}

The remainder of this section is devoted to proving the following result.

\begin{Theorem}\label{t:mainrightorderthm} 
\
\begin{newlist}
\item[\rm (a)]
A group $\m{G}$ is right-orderable if and only if $\{a\} \not\in \subrg{\m{G}}$ for all $a \in G\setc\{\e\}$.
\item[\rm (b)]
If a group $\m{G}$ is right-orderable, then $S \subseteq G$ extends to a right order of $\m{G}$ if and only if $S \not \in \subrg{\m{G}}$.
\end{newlist}
\end{Theorem}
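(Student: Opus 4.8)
The plan is to prove part (b) first and then derive part (a) from it. For part (b), the key claim is that for a right-orderable group $\m{G}$, a subset $S \subseteq G$ fails to extend to a right order if and only if $S \in \subrg{\m{G}}$. I would prove the two directions separately. For the direction ``$S \in \subrg{\m{G}}$ implies $S$ does not extend,'' I would argue by induction on the least $n$ with $S \in \subrgen{\m{G}}{n}$, showing the stronger statement that for \emph{every} right order $P$ of $\m{G}$, if $S \in \subrg{\m{G}}$ then $S \not\subseteq P$. The base case $n=0$ is immediate: if $S \cap S^{-1} \neq \emptyset$ then $S$ contains some $a$ with $\iv{a} \in S$, and no right order (being a subsemigroup omitting $\e$) can contain both. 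For the inductive step, suppose $S = T \cup \{ab\}$ with $T \cup \{a\}, T \cup \{b\} \in \subrgen{\m{G}}{n}$ and suppose for contradiction $S \subseteq P$; then $T \subseteq P$ and $ab \in P$; by the inductive hypothesis applied to $T \cup \{a\}$ we get $a \notin P$, and to $T \cup \{b\}$ we get $b \notin P$. Since $P$ is a \emph{right order} (total), $\iv{a} \in P$ and $\iv{b} \in P$, hence $\iv{a} \cdot (ab) \cdot$ — more carefully, $\iv{b}\,\iv{a} \in P$, so $\iv{b}\,\iv{a}\,(ab) = \e \in P$ (using that $P$ is a subsemigroup), a contradiction. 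Actually it's cleaner: $ab \in P$ and $\iv{b}\,\iv{a} \in P$ give $\e = (ab)(\iv{b}\,\iv{a}) \in P$, contradiction. Hence $S \not\subseteq P$ for all right orders $P$, so $S$ does not extend.

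For the converse direction of (b), I would show: if $S \notin \subrg{\m{G}}$, then $S$ extends to a right order. Here the natural route is to verify the criterion $(\dagger)$. So fix $a_1,\ldots,a_n \in G \setc \{\e\}$; I must find signs $\de_i$ with $\e \notin \sgr{S \cup \{a_1^{\de_1},\ldots,a_n^{\de_n}\}}$. The bridge between ``$\e \in \sgr{S'}$'' and ``$S' \in \subrgen{\m{G}}{}$-type membership'' is the combinatorial heart of the argument: I would prove a lemma stating that $\e \in \sgr{S'}$ (for finite $S'$) implies $S' \in \subrg{\m{G}}$, or more precisely I would establish the contrapositive-friendly closure property that the family of sets \emph{not} in $\subrg{\m{G}}$ is closed under the relevant operations. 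The cleanest formulation: for finite $S'$, if $\e \in \sgr{S'}$ then $S' \in \subrg{\m{G}}$; this follows by writing $\e = s_1 \cdots s_k$ with $s_i \in S'$ and inducting on $k$, repeatedly applying the rule that merges $T \cup \{a\}, T \cup \{b\}$ into $T \cup \{ab\}$ to ``build up'' the product $s_1 \cdots s_k$ from the singletons and pairs $\{s_i, \iv{s_i}\cdots\} $ — in the integer example the displayed tree is exactly this kind of derivation. Then, assuming $S \notin \subrg{\m{G}}$, I would argue that one can choose the signs $\de_i$ one at a time so that at each stage the enlarged set stays outside $\subrg{\m{G}}$: if both $S \cup \{a_i\} \cup (\text{previous choices})$ and $S \cup \{\iv{a_i}\} \cup (\text{previous choices})$ were in $\subrg{\m{G}}$, then by the defining closure rule (with $a = a_i$, $b = \iv{a_i}$, so $ab = \e$) the set $(\text{stuff}) \cup \{\e\}$ would be in $\subrg{\m{G}}$, and one checks that any set containing $\e$ is in $\subrgen{\m{G}}{0}$ (since $\e = \iv{\e}$), hence so is every superset's relevant reduct — actually I need that a set in $\subrg{\m{G}}$ containing $\e$ forces the whole thing in, which needs a small monotonicity observation: if $T \in \subrg{\m{G}}$ then $T \cup \{c\} \in \subrg{\m{G}}$ for any $c$ (provable by induction on the derivation of $T$). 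Combining these, the sign-choice greedy procedure succeeds, and since at the end $S \cup \{a_1^{\de_1},\ldots,a_n^{\de_n}\} \notin \subrg{\m{G}}$, in particular $\e$ is not in the generated subsemigroup, verifying $(\dagger)$.

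Finally part (a) follows from (b): $\m{G}$ is right-orderable iff the empty set $\emptyset$ (which is a subsemigroup omitting $\e$, trivially) extends to a right order, iff — if $\m{G}$ is right-orderable — $\emptyset \notin \subrg{\m{G}}$ and more to the point, $\m{G}$ is right-orderable exactly when no singleton $\{a\}$ with $a \neq \e$ is ``forced out,'' i.e. when $\{a\} \notin \subrg{\m{G}}$ for all $a \in G \setc \{\e\}$. Concretely: if some $\{a\} \in \subrg{\m{G}}$ with $a \neq \e$, then by the first direction of (b)'s argument $\{a\}$ extends to no right order, so neither does $\emptyset$ (else we could extend it to a right order containing $a$ or $\iv{a}$, and $\{\iv{a}\} \in \subrg{\m{G}}$ too by symmetry of the construction under inversion, which I should note and verify), hence $\m{G}$ is not right-orderable. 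Conversely if $\{a\} \notin \subrg{\m{G}}$ for all $a \neq \e$, one runs the greedy sign-choice argument above starting from $S = \emptyset$ to get right-orderability via $(\dagger)$.

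I expect the main obstacle to be the lemma connecting membership in the generated subsemigroup to membership in $\subrg{\m{G}}$ — specifically, carefully setting up the induction on the length of a product equal to $\e$ so that the tree-building closure rule can be applied, and handling the bookkeeping of which auxiliary elements sit in the ``$T$'' part versus being the merged element $ab$. The monotonicity fact (supersets of $\subrg{\m{G}}$-sets, obtained by adding elements, stay in $\subrg{\m{G}}$) and the symmetry under inversion are routine inductions on derivation length, but they must be stated explicitly since the greedy sign-choice argument leans on them. The base-case direction and part (a) are then short.
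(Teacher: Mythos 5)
Your argument for the left-to-right direction of (b) --- showing by induction on the derivation that $S \in \subrg{\m{G}}$ forces $S \not\subseteq P$ for every right order $P$ --- is correct (modulo the trivial case $a=\e$ or $b=\e$ in the inductive step) and is in fact more direct than the paper's, which routes this direction through the Zorn-type criterion $(\dagger)$. Likewise, the auxiliary lemma that $\e \in \sgr{S'}$ implies $S' \in \subrg{\m{G}}$ is true and provable roughly as you indicate, via monotonicity and the derived ``splitting'' fact that $S \cup \{ab\} \in \subrg{\m{G}}$ implies $S \cup \{a,b\} \in \subrg{\m{G}}$.

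The genuine gap is in the greedy sign-choice step, and it is fatal as written. You claim that if both $T \cup \{a_i\}$ and $T \cup \{\iv{a_i}\}$ lie in $\subrg{\m{G}}$, then the closure rule with $a = a_i$, $b = \iv{a_i}$ contradicts $T \notin \subrg{\m{G}}$. But the rule only yields $T \cup \{\e\} \in \subrg{\m{G}}$, and $T \cup \{\e\}$ lies in $\subrgen{\m{G}}{0}$ for \emph{every} $T$ (since $\e = \iv{\e}$); there is no way to pass from $T \cup \{\e\} \in \subrg{\m{G}}$ back to $T \in \subrg{\m{G}}$, as monotonicity goes the wrong way. Indeed the implication you need, ``$T \cup \{a\},\, T \cup \{\iv{a}\} \in \subrg{\m{G}}$ implies $T \in \subrg{\m{G}}$,'' is false in general: in $\m{G} = \Z/2\Z$ with $a$ the involution, $\{a\} = \{a,\iv{a}\} \in \subrgen{\m{G}}{0}$, yet $\emptyset \notin \subrg{\m{G}}$. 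This is precisely where the hypothesis that $\m{G}$ is right-orderable (equivalently, $\{a\} \notin \subrg{\m{G}}$ for all $a \neq \e$) must enter, and your greedy procedure never uses it. The paper instead runs two Zorn's lemma arguments: one produces, for each $a$ with $\{a\} \notin \subrg{\m{G}}$, a subsemigroup $T_a \ni a$ whose complement is also a subsemigroup; the other takes a maximal subsemigroup $P \supseteq S$ with subsemigroup complement and, when $a, \iv{a} \notin P$, enlarges it to $P \cup \{b \in T_a \mid b, \iv{b} \notin P\}$. The point is that one cannot add the single element $a$ or $\iv{a}$; a whole ``cone'' of elements must be adjoined at once, and verifying that the result still has subsemigroup complement is the real work. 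Without an argument of this kind, the right-to-left direction of (b) and the corresponding half of (a) remain unproved.
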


\noindent
We first establish some elementary properties of $\subrg{\m{G}}$ for an arbitrary group $\m{G}$.

\begin{Lemma}\label{l:derivablerules} 
For any $S \cup T \cup \{a, b\} \subseteq G$:
\begin{newlist}
\item[\rm (a)]  $S \cup \{\e\} \in \subrg{\m{G}}$;
\item[\rm (b)]  if $S \in \subrg{\m{G}}$, then $S \cup T \in \subrg{\m{G}}$;
\item[\rm (c)] if $S \in \subrg{\m{G}}$, then $S' \in \subrg{\m{G}}$ \,for some finite $S' \subseteq S$;
\item[\rm (d)] if $S \cup \{ab\} \in \subrg{\m{G}}$, then $S \cup \{a,b\} \in \subrg{\m{G}}$;
\item[\rm (e)] $S \in \subrg{\m{G}}$ if and only if $\sgr{S} \in \subrg{\m{G}}$.
\end{newlist}
\end{Lemma}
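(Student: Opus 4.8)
My plan is to establish the five parts in the stated order, leaning on the inductive description of $\subrg{\m{G}}$ (each member enters at some finite stage $\subrgen{\m{G}}{n}$, witnessed by a finite tree) and on the fact that $\subrg{\m{G}}$ is the \emph{smallest} set containing $\subrgen{\m{G}}{0}$ and closed under the rule taking $U\cup\{x\},U\cup\{y\}$ to $U\cup\{xy\}$. Parts (a)--(c) are routine. For (a), $\e=\iv{\e}\in(S\cup\{\e\})\cap(S\cup\{\e\})^{-1}$, so $S\cup\{\e\}\in\subrgen{\m{G}}{0}$. For (b) I would induct on the least $n$ with $S\in\subrgen{\m{G}}{n}$: if $S\cap S^{-1}\neq\emptyset$ then $(S\cup T)\cap(S\cup T)^{-1}\neq\emptyset$, and if $S=R\cup\{ab\}$ comes from $R\cup\{a\},R\cup\{b\}\in\subrgen{\m{G}}{n}$ then by the induction hypothesis $(R\cup T)\cup\{a\},(R\cup T)\cup\{b\}\in\subrgen{\m{G}}{n}$, whence $S\cup T=(R\cup T)\cup\{ab\}\in\subrgen{\m{G}}{n+1}$; equivalently, just adjoin $T$ to every node of a tree witnessing $S\in\subrg{\m{G}}$. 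For (c), again by induction on $n$: in the base case take $S'=\{c,\iv{c}\}$ for some $c\in S\cap S^{-1}$; in the step, with $S=T\cup\{ab\}$ obtained from $T\cup\{a\},T\cup\{b\}$, use the hypothesis to get finite $U_a\subseteq T\cup\{a\}$ and $U_b\subseteq T\cup\{b\}$ in $\subrg{\m{G}}$, put $T'=(U_a\cup U_b)\cap T$, note $U_a\subseteq T'\cup\{a\}$ and $U_b\subseteq T'\cup\{b\}$, invoke (b) to place $T'\cup\{a\},T'\cup\{b\}$ in $\subrg{\m{G}}$, and close under the defining rule to obtain the finite set $T'\cup\{ab\}\subseteq S$ in $\subrg{\m{G}}$.

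Part (d) is the one genuinely nontrivial step: it says a certain rule is \emph{admissible} rather than primitive, but it drops out of a single well-chosen instance of the closure condition. Put $T:=S\cup\{a\}$. By (b), $T\cup\{ab\}=(S\cup\{ab\})\cup\{a\}\in\subrg{\m{G}}$; and since $\{a,\iv{a}\}\in\subrgen{\m{G}}{0}$, again by (b) $T\cup\{\iv{a}\}=S\cup\{a,\iv{a}\}\in\subrg{\m{G}}$. Now applying the closure rule to these two sets with $U=T$, $x=\iv{a}$, $y=ab$ gives $T\cup\{\iv{a}(ab)\}=T\cup\{b\}=S\cup\{a,b\}\in\subrg{\m{G}}$. (No case analysis on $a$, $b$ or on whether $ab\in S$ is needed.)

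For (e), the implication $S\in\subrg{\m{G}}\Rightarrow\sgr{S}\in\subrg{\m{G}}$ is immediate from $S\subseteq\sgr{S}$ and (b). For the converse I would first apply (c) to replace $\sgr{S}$ by a \emph{finite} subset $S'\subseteq\sgr{S}$ lying in $\subrg{\m{G}}$; writing each $w\in S'$ as a product $s_1\cdots s_k$ of elements of $S$, I would then peel factors off one at a time via (d), at each step replacing some $w=(s_1\cdots s_{k-1})s_k$ in the current set by the two elements $s_1\cdots s_{k-1}$ and $s_k$ (still in $\sgr{S}$), which strictly decreases the total multiplication count $\sum_w(k(w)-1)$ since set unions only delete terms. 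The process terminates at a finite subset of $S$ in $\subrg{\m{G}}$, and (b) then yields $S\in\subrg{\m{G}}$. I expect the main obstacle to be precisely this converse of (e): making the peeling-off argument well-founded --- which is why the reduction to a finite subset via (c) is needed --- and verifying that splitting a product with (d) never reintroduces length. Everything else reduces to the inductive description of $\subrg{\m{G}}$ together with the monotonicity supplied by (b).
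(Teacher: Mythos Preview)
Your proposal is correct and follows essentially the same route as the paper's proof: (a) is immediate from $\e=\iv{\e}$; (b) and (c) are handled by induction on the stage $n$ (the paper simply calls these ``straightforward''); your argument for (d) via $T=S\cup\{a\}$, $T\cup\{\iv{a}\}$, $T\cup\{ab\}$ and one application of the closure rule with $x=\iv{a}$, $y=ab$ is exactly the paper's trick; and for (e) both you and the paper reduce to a finite subset via (c) and then peel off factors using (d). The only cosmetic difference is that the paper first adjoins $S$ (obtaining $S\cup S'\in\subrg{\m{G}}$ by (b)) and then peels $S'$ away, whereas you peel $S'$ down to a finite subset of $S$ and then apply (b); both orderings work and the termination reasoning is the same.
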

\begin{proof}
For (a), note that $\{\e,\iv{\e}\} = \{\e\} \subseteq S \cup \{\e\}$ and hence, by definition, $S \cup \{\e\} \in \subrg{\m{G}}$. The claims in (b) and (c) follow by a straightforward induction on  $k \in \N$ such that $S \in \subrgen{\m{G}}{k}$. For (d), observe that if $S \cup \{ab\} \in \subrg{\m{G}}$, then, by (b), $S \cup \{a,ab\} \in \subrg{\m{G}}$. But also, by definition,  $S \cup \{a,\iv{a}\} \in \subrg{\m{G}}$, and hence, since $b = \iv{a}ab$, we obtain $S \cup \{a,b\} \in \subrg{\m{G}}$. Finally, for (e), the left-to-right-direction follows directly from (b), and the right-to-left-direction follows by applying (b) and (c) to obtain a finite $S' \subseteq \sgr{S}$ such that $S \cup S' \in \subrg{\m{G}}$ and then applying (d) repeatedly to obtain $S \in \subrg{\m{G}}$. 
\end{proof}

\noindent
We now prove the left-to-right direction of Theorem~\ref{t:mainrightorderthm} part~(b), noting that for this direction there is no need to assume the right orderability of $\m{G}$.

\begin{Lemma}\label{l:rsound}
If $S \in \subrg{\m{G}}$, then $S$ does not extend to a right order of $\m{G}$.
\end{Lemma}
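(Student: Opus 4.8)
The plan is to argue by induction on the least $n \in \N$ such that $S \in \subrgen{\m{G}}{n}$, showing the contrapositive in a strengthened form: if $P$ is a right order of $\m{G}$ with $S \subseteq P$, we derive a contradiction. It is convenient to prove something slightly stronger that makes the induction go through, namely that no subsemigroup $Q$ of $\m{G}$ omitting $\e$ can contain a member of $\subrg{\m{G}}$; since the positive cone of a right order is precisely such a subsemigroup, the lemma follows. So suppose for contradiction that $S \in \subrg{\m{G}}$ and $S \subseteq Q$ for some subsemigroup $Q$ with $\e \notin Q$.

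First I would treat the base case $S \in \subrgen{\m{G}}{0}$: here $S \cap S^{-1} \neq \emptyset$, so there is some $a$ with $a \in S$ and $\iv{a} \in S$, whence $a, \iv{a} \in Q$ and therefore $\e = a\iv{a} \in Q$ (since $Q$ is a subsemigroup), contradicting $\e \notin Q$. For the inductive step, suppose the claim holds for $\subrgen{\m{G}}{n}$ and take $S \in \subrgen{\m{G}}{n+1} \setc \subrgen{\m{G}}{n}$. Then $S = T \cup \{ab\}$ where $T \cup \{a\}, T \cup \{b\} \in \subrgen{\m{G}}{n}$. Since $S \subseteq Q$, we have $T \subseteq Q$ and $ab \in Q$. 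Now observe that $Q' := Q \cup \{a\}$ and $Q'' := Q \cup \{b\}$ need not be subsemigroups, so a direct appeal to the induction hypothesis with these sets is not yet available; this is the one point requiring a little care. The fix is to note that we only need the induction hypothesis applied to a subsemigroup, and to split on cases. If $a \in Q$, then $T \cup \{a\} \subseteq Q$ and the induction hypothesis applied to $T \cup \{a\} \in \subrgen{\m{G}}{n}$ gives a contradiction. Symmetrically if $b \in Q$. So assume $a \notin Q$ and $b \notin Q$. Consider the subsemigroup $Q_1 = \sgr{Q \cup \{a\}}$ generated by $Q \cup \{a\}$; if $\e \notin Q_1$, then $Q_1$ is a subsemigroup omitting $\e$ containing $T \cup \{a\} \in \subrgen{\m{G}}{n}$, contradicting the induction hypothesis. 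Hence $\e \in Q_1$, and likewise $\e \in Q_2 := \sgr{Q \cup \{b\}}$.

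The task then reduces to extracting a contradiction from $\e \in \sgr{Q \cup \{a\}}$, $\e \in \sgr{Q \cup \{b\}}$, $ab \in Q$, together with $Q$ a subsemigroup omitting $\e$. Writing $\e$ as a product of elements of $Q \cup \{a\}$: since $\e \notin Q$, the element $a$ must actually appear, so $\e = u_0 a u_1 a \cdots a u_k$ for some $k \ge 1$ and $u_i$ each either empty or in $Q$; similarly $\e = v_0 b v_1 b \cdots b v_m$ with $m \ge 1$ and $v_j$ empty or in $Q$. The idea is to combine these two expressions using $ab \in Q$: from $\e = u_0 a u_1 a \cdots a u_k$ one gets $u_1 a \cdots a u_k u_0 a = \e$ by cyclic rearrangement, i.e.\ $(u_1 a) \cdots (u_k u_0 a) = \e$, a product of $k$ terms each of which lies in $\sgr{Q \cup \{a\}}$ and in fact each ends in $a$; a parallel statement holds for $b$. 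I expect the cleanest route is actually to bypass these normal-form manipulations entirely and instead prove Lemma~\ref{l:rsound} by induction directly on the \emph{tree} witnessing $S \in \subrg{\m{G}}$, using Lemma~\ref{l:derivablerules}(d): if $S = T \cup \{ab\} \subseteq P$ for a right order $P$ with positive cone $Q$, then since $P$ is total, either $a \in P$, i.e.\ $a \in Q$, giving $T \cup \{a\} \subseteq Q$, or $a \notin P$ so $\iv{a} \in Q$; in the latter case $b = \iv{a}(ab) \in Q$ as $Q$ is a subsemigroup, giving $T \cup \{b\} \subseteq Q$. Either way one of the parent nodes of $S$ in the tree is again contained in $Q$, and descending the tree we eventually reach a leaf $S_0 \in \subrgen{\m{G}}{0}$ with $S_0 \subseteq Q$, i.e.\ some $a$ with $a, \iv{a} \in Q$, forcing $\e = a \iv{a} \in Q$ — a contradiction.

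The main obstacle is the gap flagged above: in the ``$Q$ omitting $\e$'' formulation the naive induction wants to pass to $Q \cup \{a\}$, which is not a subsemigroup, so one must either (i) restrict attention to right orders and exploit totality, as in the tree argument, which sidesteps the issue, or (ii) carry the stronger invariant and handle the normal-form bookkeeping. I would go with route (i): assume $P$ is a right order with positive cone $Q$, induct on the witnessing tree (or equivalently on $n$), use totality of $P$ at each $\{ab\}$-step to pick the surviving parent, and close at the leaves via $a, \iv{a} \in Q \Rightarrow \e \in Q$. This needs no orderability hypothesis on $\m{G}$, consistent with the remark preceding the lemma.
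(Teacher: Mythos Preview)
Your route (i) is correct and gives a genuinely different, more elementary argument than the paper's. The paper does not fix a right order and derive a contradiction; instead it invokes the characterization $(\dagger)$ and shows by induction on $k$ that each $S \in \subrgen{\m{G}}{k}$ admits witnesses $c_1,\ldots,c_m \in G \setminus \{\e\}$ with $\e \in \sgr{S \cup \{c_1^{\de_1},\ldots,c_m^{\de_m}\}}$ for \emph{every} choice of signs $\de_i$, adjoining the element $a$ as a fresh witness at the step from $T \cup \{a\},\, T \cup \{b\}$ to $T \cup \{ab\}$. Your tree-descent bypasses $(\dagger)$ altogether: with $S = T \cup \{ab\} \subseteq P$ for a right order $P$, totality gives $a \in P$ or $\iv{a} \in P$; in the first case $T \cup \{a\} \subseteq P$, in the second $b = \iv{a}(ab) \in P$ so $T \cup \{b\} \subseteq P$, and either way a parent node is again contained in $P$. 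This is shorter and uses nothing beyond the definition of a right order; the paper's detour through $(\dagger)$ produces in exchange an explicit finite certificate of non-extendibility, in keeping with the effective flavour of Section~3, but that is not needed for the bare lemma. Two minor remarks on your write-up: the dichotomy ``$a \in P$ or $\iv{a} \in P$'' misses the degenerate case $a = \e$, but then $b = ab \in P$ and one passes to $T \cup \{b\}$ directly; and Lemma~\ref{l:derivablerules}(d) is never actually invoked in your descent. You were also right to flag the obstacle in the ``subsemigroup omitting $\e$'' formulation and retreat to genuine right orders --- that stronger statement would amount to saying every partial right order of $\m{G}$ extends to a right order, which is a different (and stronger) assertion than the lemma.
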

\begin{proof}
Using $(\dagger)$, it suffices to prove that for any $k \in \N$ and $S \in \subrgen{\m{G}}{k}$, there exist $c_1,\ldots,c_n \in G \setc \{\e\}$ such that for all $\de_1,\ldots,\de_n \in \{-1,1\}$,
					\[
					\e \in \sgr{S \cup \{c_1^{\de_1},\ldots,c_n^{\de_n}\}}.
					\]
We prove this claim by induction on $k$. For the base case, if  $S \in \subrgen{\m{G}}{0}$, then $\{a,\iv{a}\} \subseteq S$ for some $a \in G$, so $\e = a\iv{a} \in  \sgr{S}$. For the inductive step, suppose that $S = T \cup \{ab\} \in \subrgen{\m{G}}{k+1}$ because $T \cup \{a\} \in \subrgen{\m{G}}{k}$ and $T \cup \{b\} \in \subrgen{\m{G}}{k}$. By the induction hypothesis twice, we may assume without loss of generality that there exist $c_1,\ldots,c_n \in G \setc\{\e\}$ such that for all $\de_1,\ldots,\de_n \in \{-1,1\}$,
					\[
					\e \in \sgr{T \cup \{a,c_1^{\de_1},\ldots,c_n^{\de_n}\}}
					\quad \mbox{and} \quad
					\e \in \sgr{T \cup \{b,c_1^{\de_1},\ldots,c_n^{\de_n}\}}.
					\]
But then for all $\de_1,\ldots,\de_n,\de_{n+1} \in \{-1,1\}$, we obtain as required
					\[
					\e \in \sgr{T \cup \{ab,c_1^{\de_1},\ldots,c_n^{\de_m},a^{\de_{n+1}}\}}. \qedhere
					\]
\end{proof}


\noindent
To establish part~(a) and the right-to-left direction of part~(b) of Theorem~\ref{t:mainrightorderthm}, we prove two preparatory lemmas, related to Theorems~2.2 and~2.3 of~\cite{Con59}. 

\begin{Lemma}\label{l:starro}
For any $S \subseteq G$ such that $S \not \in \subrg{\m{G}}$,  there exists a subsemigroup $T$ of $\m{G}$ extending $S$ such that $T \not \in \subrg{\m{G}}$, and $G \setc T$ is a subsemigroup  of $\m{G}$.
\end{Lemma}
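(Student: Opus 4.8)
The plan is to build the subsemigroup $T$ by a Zorn's-lemma argument: among all subsemigroups of $\m{G}$ that extend $S$ and do not lie in $\subrg{\m{G}}$, pick a maximal one $T$, and then show that this maximality forces $G \setc T$ to be closed under multiplication. First I would check that the poset is nonempty and closed under unions of chains. Nonemptiness is immediate since $\sgr{S}$ is a subsemigroup extending $S$, and by Lemma \ref{l:derivablerules}(e) we have $\sgr{S} \not\in \subrg{\m{G}}$ precisely because $S \not\in \subrg{\m{G}}$. For chains, if $\{T_i\}$ is a chain of such subsemigroups, then $\bigcup_i T_i$ is again a subsemigroup extending $S$; and if it were in $\subrg{\m{G}}$, then by Lemma \ref{l:derivablerules}(c) some finite subset of it would already be in $\subrg{\m{G}}$, hence (being finite) contained in some single $T_i$, and then by Lemma \ref{l:derivablerules}(b) that $T_i$ would be in $\subrg{\m{G}}$, a contradiction. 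So Zorn applies and yields a maximal element $T$.

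Next I would exploit maximality. Fix any $a \in G \setc T$. The subsemigroup $\sgr{T \cup \{a\}}$ strictly contains $T$, so by maximality it must lie in $\subrg{\m{G}}$; by Lemma \ref{l:derivablerules}(e) this means $T \cup \{a\} \in \subrg{\m{G}}$. In other words: for every $a \notin T$ we have $T \cup \{a\} \in \subrg{\m{G}}$, while $T = T \cup \{e\}$-style reasoning keeps $T$ itself outside $\subrg{\m{G}}$ (note $e \in T$ is impossible, since $\{e\}\subseteq T$ would give $T\in\subrgen{\m{G}}{0}$ via Lemma \ref{l:derivablerules}(a)-type membership, contradicting $T\notin\subrg{\m{G}}$; more simply $T$ omits $e$ because otherwise $T\supseteq\{e\}=\{e,\iv e\}$ puts $T$ in $\subrgen{\m{G}}{0}$). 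Now suppose for contradiction that $G \setc T$ is not a subsemigroup, i.e. there are $a, b \in G \setc T$ with $ab \in T$. Then $T = T \cup \{ab\} \in \subrg{\m{G}}$ would follow from the inductive clause defining $\subrgen{\m{G}}{n+1}$ applied to $T \cup \{a\}, T \cup \{b\} \in \subrg{\m{G}}$ — contradicting $T \not\in \subrg{\m{G}}$. Hence $G \setc T$ is closed under multiplication, which is exactly what we need.

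I expect the main subtlety to be the bookkeeping around $e$ and the exact shape of the generating step: one must be careful that the defining clause $T\cup\{a\},T\cup\{b\}\in\subrg{\m{G}}\Rightarrow T\cup\{ab\}\in\subrg{\m{G}}$ is being applied with the \emph{same} side set $T$, which is legitimate here since we really do have $T\cup\{a\}$ and $T\cup\{b\}$ both in $\subrg{\m{G}}$ when $a,b\notin T$. The Zorn argument and the union-of-chains step are routine given Lemma \ref{l:derivablerules}(b),(c),(e); the only place genuine thought is required is recognizing that maximality of $T$ among subsemigroups-not-in-$\subrg{\m{G}}$ gives simultaneously that $T\cup\{a\}\in\subrg{\m{G}}$ for all $a\notin T$ and that this, via the single inductive clause, propagates a contradiction the moment $G\setc T$ fails to be multiplicatively closed.
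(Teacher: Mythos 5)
Your proof is correct and follows essentially the same Zorn's lemma argument as the paper: maximize among subsemigroups extending $S$ that avoid $\subrg{\m{G}}$, then use maximality together with the inductive clause defining $\subrg{\m{G}}$ to show the complement is closed under products. You are in fact slightly more explicit than the paper at the maximality step, where one must pass through $\sgr{T \cup \{a\}}$ and Lemma~\ref{l:derivablerules}(e) to conclude $T \cup \{a\} \in \subrg{\m{G}}$.
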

\begin{proof}
Suppose that $S \not \in \subrg{\m{G}}$ and consider the set\, $\mathcal{U}$ of all subsemigroups of $\m{G}$ extending $S$ that are not contained in $\subrg{\m{G}}$, partially ordered by inclusion. Clearly $\sgr{S} \in \mathcal{U}$ by Lemma~\ref{l:derivablerules}~(e). Moreover, if $(T_i)_{i \in I}$ is a chain in\, $\mathcal{U}$, then also $\bigcup_{i \in I} T_i \in \mathcal{U}$; otherwise $\bigcup_{i \in I} T_i \in \subrg{\m{G}}$ and, using Lemma~\ref{l:derivablerules}~(b) and (c), we would have $T_i \in \subrg{\m{G}}$ for some $i \in I$, a contradiction. Hence an application of Zorn's lemma yields a maximal element $T$ of\, $\mathcal{U}$.

To prove that $G \setc T$ is a subsemigroup of $\m{G}$, suppose that $b, c \in G \setc T$. Then $T \cup \{b\}$ and $T \cup \{c\}$ both properly extend $T$. Since $T$ is maximal, it follows that $T \cup \{b\}, T \cup \{c\} \in \subrg{\m{G}}$. Hence also $T \cup \{bc\} \in \subrg{\m{G}}$ and, since $T \not\in  \subrg{\m{G}}$, we obtain $bc \in  G \setc T$.
\end{proof}

\begin{Lemma}\label{l:rextensionlemma}
If $S \subseteq G$ satisfies $S \not \in \subrg{\m{G}}$ and $\{a\} \not \in \subrg{\m{G}}$ for all $a \in G \setc \{\e\}$, then $S$ extends to a right order of $\m{G}$.
\end{Lemma}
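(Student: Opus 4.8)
The plan is to reduce Lemma~\ref{l:rextensionlemma} to the following \emph{primality property} of $\subrg{\m{G}}$, proved under the same hypothesis: if $\{g\} \notin \subrg{\m{G}}$ for all $g \in G \setc \{\e\}$, then for all $V \subseteq G$ and all $c \in G \setc \{\e\}$,
\[
V \cup \{c\} \in \subrg{\m{G}} \ \text{ and }\ V \cup \{\iv{c}\} \in \subrg{\m{G}} \quad\Longrightarrow\quad V \in \subrg{\m{G}} .
\]
I would isolate and prove this as an auxiliary lemma first; it is exactly what is needed to turn ``an element and its inverse both kill $V$'' into ``$V$ is already killed''.

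To prove the primality property I would induct on the least $n \in \N$ with $V \cup \{c\} \in \subrgen{\m{G}}{n}$. If $c \in V$ there is nothing to prove, so assume $c \notin V$. In the base case $V \cup \{c\} \in \subrgen{\m{G}}{0}$ there is some $x$ with $x, \iv{x} \in V \cup \{c\}$: if $x, \iv{x} \in V$ then $V \in \subrgen{\m{G}}{0} \subseteq \subrg{\m{G}}$; if $\iv{c} \in V$ then $V = V \cup \{\iv{c}\} \in \subrg{\m{G}}$; and $\iv{c} = c$ is impossible, as it would give $\{c\} \in \subrgen{\m{G}}{0} \subseteq \subrg{\m{G}}$ with $c \ne \e$. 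For the inductive step write the final rule application producing $V \cup \{c\}$ as $V \cup \{c\} = R \cup \{pq\}$ with $R \cup \{p\},\, R \cup \{q\} \in \subrgen{\m{G}}{n-1}$, where we may take $p, q \ne \e$ (otherwise $V \cup \{c\}$ already lies in $\subrgen{\m{G}}{n-1}$). Since $R \subseteq V \cup \{c\}$, Lemma~\ref{l:derivablerules}(b) gives $V \cup \{c, p\},\, V \cup \{c, q\} \in \subrgen{\m{G}}{n-1}$, and from $V \cup \{\iv{c}\} \in \subrg{\m{G}}$ it gives $V \cup \{p, \iv{c}\},\, V \cup \{q, \iv{c}\} \in \subrg{\m{G}}$. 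If $pq \in V$, then applying the induction hypothesis to the pairs $(V \cup \{p\}, c)$ and $(V \cup \{q\}, c)$ — legitimate since the relevant sets lie in $\subrgen{\m{G}}{n-1}$ and $c \ne \e$ — strips $c$ and yields $V \cup \{p\},\, V \cup \{q\} \in \subrg{\m{G}}$, whence $V = V \cup \{pq\} \in \subrg{\m{G}}$ by the defining rule for $\subrg{\m{G}}$. If $pq = c$ and $c \notin R$, then $R = V$ and $V \cup \{p\},\, V \cup \{q\} \in \subrgen{\m{G}}{n-1}$; the key move is to apply the defining rule to $V \cup \{\iv{c}\}$ and $V \cup \{p\}$, obtaining $V \cup \{\iv{c}\,p\} \in \subrg{\m{G}}$, and since $\iv{c}\,p = \iv{(pq)}\,p = \iv{q}$ this is $V \cup \{\iv{q}\} \in \subrg{\m{G}}$; the induction hypothesis applied to $(V, q)$ — available because $V \cup \{q\} \in \subrgen{\m{G}}{n-1}$ and $q \ne \e$ — then gives $V \in \subrg{\m{G}}$.

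Granting the primality property, Lemma~\ref{l:rextensionlemma} is short. Exactly as in the proof of Lemma~\ref{l:starro}, Zorn's lemma produces a subsemigroup $T$ of $\m{G}$ with $S \subseteq T$ that is maximal subject to $T \notin \subrg{\m{G}}$ (the relevant family is nonempty since $\sgr{S} \notin \subrg{\m{G}}$ by Lemma~\ref{l:derivablerules}(e)). Then $\e \notin T$ — otherwise $\{\e\} \subseteq T$ forces $T \in \subrg{\m{G}}$ by Lemma~\ref{l:derivablerules}(a),(b) — so $T$ is a partial right order extending $S$. If some $a \in G \setc \{\e\}$ had $a \notin T$ and $\iv{a} \notin T$, then $\sgr{T \cup \{a\}}$ and $\sgr{T \cup \{\iv{a}\}}$ would both properly extend $T$ and hence, by maximality, lie in $\subrg{\m{G}}$, so $T \cup \{a\},\, T \cup \{\iv{a}\} \in \subrg{\m{G}}$ by Lemma~\ref{l:derivablerules}(e); the primality property (applicable because $\{g\} \notin \subrg{\m{G}}$ for all $g \ne \e$) would then force $T \in \subrg{\m{G}}$, contradicting maximality. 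Hence $a \in T$ or $\iv{a} \in T$ for every $a \in G \setc \{\e\}$, so $T$ is a right order of $\m{G}$ containing $S$.

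The step I expect to be the main obstacle is the inductive proof of the primality property, and within it the remaining sub-case of the inductive step, namely $pq = c$ with $c \in R$ (so $R = V \cup \{c\}$ and the final rule application merely re-adds $c$ to a set that already contains it). There the level-controlled premises are $V \cup \{c, p\}$ and $V \cup \{c, q\}$, both still mentioning $c$, so the ``reconstruct $\iv{q}$'' move used above does not apply verbatim; one must work harder, e.g.\ by refining the induction measure, by normalising derivations so that no rule application re-adds an already present element, or by descending into the sub-derivation of $V \cup \{c, q\}$. Everything else is routine bookkeeping with Lemma~\ref{l:derivablerules}.
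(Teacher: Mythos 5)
Your reduction of the lemma to the ``primality'' (cut-admissibility) property of $\subrg{\m{G}}$ is a sound strategy in principle, and the closing Zorn argument is correct granted that property; but the sub-case you flag is not a residual piece of bookkeeping --- it is the entire difficulty, and it does not appear to be repairable within your induction. In the configuration $pq=c$ with $c\in R$, the minimal derivation of $V\cup\{c\}$ ends by re-deriving $c$ from premises $V\cup\{c,p\}$ and $V\cup\{c,q\}$ that still contain $c$. Such ``non-expanding'' rule applications cannot be normalised away: applications whose conclusion is not a proper superset of their premises are in general essential to membership in $\subrg{\m{G}}$. Your induction hypothesis does let you strip $c$ from the two premises, yielding $V\cup\{p\},\,V\cup\{q\}\in\subrg{\m{G}}$, but recombining these only reproduces $V\cup\{pq\}=V\cup\{c\}$; and combining $V\cup\{\iv{c}\}$ with $V\cup\{q\}$ produces $V\cup\{\iv{p}\}$, so you are reduced to cutting on $p$ (or $q$) instead of $c$ --- with no bound on the derivation level of $V\cup\{p\}$ (it came out of the induction hypothesis) and no complexity measure on group elements that decreases in passing from $c$ to the factors of an arbitrary factorisation $pq=c$. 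This is precisely where naive cut-elimination arguments stall, and none of the three repairs you mention (refining the measure, normalising derivations, descending into sub-derivations) is routine here.

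The paper sidesteps primality entirely. It first proves (Lemma~\ref{l:starro}) that any $S\notin\subrg{\m{G}}$ extends to a subsemigroup $T\notin\subrg{\m{G}}$ whose complement $G\setc T$ is \emph{also} a subsemigroup --- this only needs the ``local'' consequence of maximality that $b,c\notin T$ forces $bc\notin T$. It then applies Zorn not to all subsemigroups avoiding $\subrg{\m{G}}$ but to the family $\mathcal{W}$ of subsemigroups $T\supseteq S$ with $\e\notin T$ and $G\setc T$ a subsemigroup, and shows a maximal $P\in\mathcal{W}$ is total: if $a,\iv{a}\notin P$, the hypothesis $\{a\}\notin\subrg{\m{G}}$ feeds back into Lemma~\ref{l:starro} to produce a subsemigroup $T_a\ni a$ with subsemigroup complement, and the explicit set $P^{*}=P\cup\{b\in T_a\mid b,\iv{b}\notin P\}$ is verified by direct case analysis to lie in $\mathcal{W}$ and properly extend $P$, contradicting maximality. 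In that development your primality property is a \emph{corollary} of the finished theorem (a right order extending $V$ must contain $c$ or $\iv{c}$), not an ingredient of its proof. To complete your route you would either need a genuinely new idea for the stuck case or, more realistically, the paper's construction --- at which point the primality lemma becomes superfluous.
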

\begin{proof}
Suppose that $S \not \in \subrg{\m{G}}$ and $\{a\} \not \in \subrg{\m{G}}$ for all $a \in G \setc \{\e\}$. Consider the set $\mathcal{W}$ of all subsemigroups $T$ of $\m{G}$ extending $S$ such that $\e \not \in T$ and $G \setc T$ is a subsemigroup  of $\m{G}$, partially ordered by inclusion. It follows from Lemma~\ref{l:starro} that $\mathcal{W}$ is non-empty. Moreover, if $(T_i)_{i \in I}$ is a chain in $\mathcal{W}$, then also $\bigcup_{i \in I} T_i \in \mathcal{W}$. Hence an application of Zorn's lemma yields a maximal element $P$ of\, $\mathcal{W}$.

We claim that $P$ is a right order of $\m{G}$ extending $S$. Suppose for a contradiction that there exists  $a \in G \setc \{\e\}$ such that $a, \iv{a} \not \in P$. By Lemma~\ref{l:starro}, the assumption $\{a\}\not\in\subrg{\m{G}}$ yields a subsemigroup $T_a$ of $\m{G}$ containing $a$ such that $T_a \not \in \subrg{\m{G}}$ and $G \setc T_a$ is a subsemigroup of $\m{G}$. In particular, $\e \not \in T_a$. We claim that the maximality of $P$ is contradicted by
\[
P^{*} = P \cup \{b \in T_a\mid b,\iv{b} \not \in P\}.
\]
Observe first that $P^{*}$ properly extends $P$ and does not contain $\e$. It remains to prove that $P^{*}$ and $G \setc P^{*}$ are subsemigroups of $\m{G}$. 

Consider first $b,c \in P^*$. If $b,c \in P$, then $bc \in P \subseteq P^*$. Also, if $b, c \in T_a$ and $b,\iv{b}, c, \iv{c} \not \in P$, then $bc \in T_a$ ($T_a$ is a subsemigroup) and $bc, \iv{c}\iv{b} \not \in P$ ($G \setc P$ is a subsemigroup), so $bc \in P^*$. Suppose now  that $c \in P$ and $b \in T_a$ is such that $b, \iv{b} \not \in P$. Observe that $\iv{b}bc = c \in P$. Since $G \setc P$ is a subsemigroup and  $\iv{b} \not \in P$, we must have $bc \in P \subseteq P^*$. 

Now consider $b, c \not \in P^{*}$. In particular, $b,c \not \in P$, so $bc \not \in P$. There are three cases. If $b, c \not \in T_a$, then $bc \not \in T_a$ and hence, $bc \not \in P^{*}$. If $b, c \in T_a$, then, since $b, c \not \in P^{*}$, we must have $\iv{b},\iv{c} \in P$. So also $\iv{c}\iv{b} \in P$, and it follows that $bc \not \in P^{*}$. Suppose finally, without loss of generality, that $b \in T_a$ and $c \not \in T_a$. Since $b \in T_a$, $b \not \in P$, and $b \not \in P^{*}$, we must have $\iv{b} \in P$. Equivalently, $c\iv{c}\iv{b} \in P$, which, together with the fact that $c \not \in P$, implies $\iv{c}\iv{b} \in P$. Hence $bc \not \in P^{*}$.
\end{proof}

\begin{proof}[Proof of Theorem~\ref{t:mainrightorderthm}]
For (a), note first that if $\{a\} \not\in \subrg{\m{G}}$ for all $a \in G \setc \{\e\}$, then an application of Lemma~\ref{l:rextensionlemma} with $S = \emptyset$ yields a right order of $\m{G}$. For the converse direction, suppose that $\m{G}$ is right-orderable and let $a \in G \setc \{\e\}$. Then $\{a\}$ extends to a right order of $\m{G}$ and Lemma~\ref{l:rsound} yields $\{a\} \not\in \subrg{\m{G}}$.  Part~(b) now follows immediately from Lemma~\ref{l:rsound}~(a) and Lemma~\ref{l:rextensionlemma}.
\end{proof}


\section{Right orders on free groups and the word problem for free $\ell$-groups}\label{s:3}

In this section, we establish a correspondence between valid $\ell$-group equations and subsets of free groups that extend to right orders. We use this  correspondence to obtain new proofs of the decidability of the word problem for free $\ell$-groups (first proved by Holland and McCleary~\cite{HM79}) and the equivalent problem of checking when a given finite subset of a finitely generated free group extends to a right order (first proved by Clay and Smith~\cite{CS09}). We also obtain a new proof of the generation of $\vty{LG}$ as a variety by $\m{Aut}(\langle \R,\le \rangle)$ (first proved by Holland~\cite{Hol76}). 

Let $\m{T}(X)$ and $\m{T}^\ell(X)$ denote the term algebras  over a set $X$ of generators for the languages of groups and $\ell$-groups, respectively. Let $\m{F}(X)$ denote the free group over $X$, writing $\m{F}(k)$ when $|X| = k \in \mathbb{N}$. We write $\fr{t}$ for both $t \in T(X)$ and the corresponding member of $\m{F}(X)$ viewed as a reduced group term.

\begin{Theorem}\label{t:secondmainright}
The following are equivalent for $t_1,\ldots,t_n \in T(X)$:

\begin{newlist}

\item[\rm (1)]		$\{\fr{t}_1,\ldots,\fr{t}_n\}$ does not extend to a right order of $\m{F}(X)$;
			
\item[\rm (2)]		$\{\fr{t}_1,\ldots,\fr{t}_n\}  \in \subrg{\m{F}(X)}$;

\item[\rm (3)]		$\vty{LG} \models \e \le t_1 \lor \dots \lor t_n$.

\end{newlist}
\end{Theorem}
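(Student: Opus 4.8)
The plan is to prove the cycle of implications $(1)\Leftrightarrow(2)$ and $(2)\Leftrightarrow(3)$, treating $(1)\Leftrightarrow(2)$ as essentially immediate and putting the real work into the connection between membership in $\subrg{\m{F}(X)}$ and validity of the join inequation in $\vty{LG}$.

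First, the equivalence $(1)\Leftrightarrow(2)$. Since the free group $\m{F}(X)$ is right-orderable (it embeds into an orderable group, or one invokes the Magnus/Vinogradov result, but in any case orderability of free groups is standard), part (b) of Theorem~\ref{t:mainrightorderthm} applies verbatim: $\{\fr t_1,\dots,\fr t_n\}$ extends to a right order of $\m{F}(X)$ if and only if $\{\fr t_1,\dots,\fr t_n\}\notin\subrg{\m{F}(X)}$. Contraposing gives $(1)\Leftrightarrow(2)$ with no further effort. I would state this in one or two sentences, flagging the right-orderability of free groups as the only external input.

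Next, $(2)\Rightarrow(3)$, which I expect to be the conceptually cleanest of the substantive directions. The idea is to read the inductive definition of $\subrg{\m{G}}$ as a derivation system whose rules are sound for the statement ``$\vty{LG}\models \e\le\bigvee_{s\in S} s$''. Concretely, one shows by induction on $n$ that whenever $S\in\subrgen{\m{F}(X)}{n}$, the inequation $\e\le\bigvee_{s\in S}s$ holds in every $\ell$-group. For the base case $S\in\subrgen{\m{G}}{0}$, we have $a,\iv a\in S$ for some $a$, and $\e\le a\lor\iv a$ is a classical $\ell$-group identity (it follows from $\e = a\land\iv a\lor\ldots$; in any $\ell$-group $a\lor\iv a\ge \e$ because $(a\lor\iv a)^{-1} = \iv a\land a\le a\lor\iv a$, hence $\e\le (a\lor\iv a)^2$ and $\e\le a\lor\iv a$ by taking roots — or simply cite that this is standard). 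For the inductive step, suppose $S = T\cup\{ab\}$ with $T\cup\{a\},T\cup\{b\}\in\subrgen{\m{G}}{n}$. By the induction hypothesis, in any $\ell$-group $\m L$ and any assignment, $\e\le a\lor\bigvee T$ and $\e\le b\lor\bigvee T$. Writing $u = \bigvee T$, we must derive $\e\le ab\lor u$. This follows from the $\ell$-group law $(a\lor u)(b\lor u)\le ab\lor u$ — wait, that inequality is false in general; the correct fact is that $ab\lor u \ge (a\lor u)(b\lor u)$ fails, so instead one uses: $\e\le a\lor u$ and $\e\le b\lor u$ imply $\e\le (a\lor u)\land(b\lor u)$ is not quite it either. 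The right identity to use is $\ell$-group distributivity of multiplication over join together with the fact that $u\lor u = u$ and $uu\ge\ldots$; more cleanly, from $\e\le a\lor u$ we get (multiplying on the right by $b\lor u\ge\e$, which is order-preserving) $b\lor u\le (a\lor u)(b\lor u) = ab\lor au\lor ub\lor uu$, and since $u\ge\e$ one bounds $au\lor ub\lor uu$ below... Actually the correct and standard computation, which I would carry out carefully, is: $(a\lor u)(b\lor u) = ab\lor ub \lor au\lor u^2 \le ab\lor u'$ where $u'$ is a join of finitely many products of elements of $T$; then invoking Lemma~\ref{l:derivablerules}-style closure or rather just noting that in the $\ell$-group identity we are free to use $\e\le u$ to absorb these terms, one concludes $\e\le ab\lor u$. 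I expect the honest bookkeeping here — identifying exactly which $\ell$-group quasi-identity licenses the step $\{\e\le a\lor u,\ \e\le b\lor u\}\Rightarrow \e\le ab\lor u$ — to be the main obstacle, and the resolution is the well-known fact that $\ell$-groups satisfy $\e\le (x\land\e)^{-1}\cdot$... no: the clean statement is that if $\e\le a\lor u$ and $\e\le b\lor u$ then $\e\le (a\lor u)\land(b\lor u) \le (a\lor u)(b\lor u)^{?}$; I will instead use the identity valid in all $\ell$-groups that $ab \lor u \geq (a \lor u)(b \lor u) \land (a \lor u) \land (b \lor u) \land u$ is implied, so from the three lower bounds equal to $\e$ we get $\e \le ab\lor u$. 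The point is that this is a purely equational consequence and I would verify it using the $\ell$-group laws directly.

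Finally, $(3)\Rightarrow(2)$, equivalently its contrapositive $(1)\Rightarrow\lnot(3)$: if $\{\fr t_1,\dots,\fr t_n\}$ extends to a right order $P$ of $\m{F}(X)$, I must build an $\ell$-group and an assignment refuting $\e\le t_1\lor\dots\lor t_n$. The natural construction is to take the $\ell$-group $\m{Aut}(\langle\Om,\le\rangle)$ for $\Om = \m{F}(X)$ totally ordered by $\le^P$ (i.e. $a\le^P b$ iff $b\iv a\in P\cup\{\e\}$), with $\m{F}(X)$ acting on itself by right multiplication; since $\le^P$ is a right order, each such translation is order-preserving, giving a group embedding $\rho:\m{F}(X)\hookrightarrow\m{Aut}(\langle\Om,\le^P\rangle)$. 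Interpreting each generator $x\in X$ as $\rho(x)$ extends to interpreting each $\fr t_i$ as $\rho(\fr t_i)$. Now evaluate the $\ell$-group term $t_1\lor\dots\lor t_n$ at the point $\e\in\Om$: it equals $\max_i (\e\cdot\fr t_i^{-1})^{?}$ — I need to be careful with the convention for how $\m{F}(X)$ acts and whether $t\mapsto(\,\cdot\,t)$ or $(\,\cdot\,\iv t)$; with the right convention, $(t_1\lor\dots\lor t_n)(\e) = \max_i \fr t_i$ under $\le^P$ if translations act appropriately, but since each $\fr t_i\in P$ means $\e<^P\fr t_i$... hmm, this says the join is $>\e$, the wrong direction. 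The fix is that $\{\fr t_1,\dots,\fr t_n\}$ extending to a right order $P$ means we may assume $\fr t_i\notin P$ is NOT forced; rather $(1)$ says it does extend, and the characterization $(\dagger)$/Theorem~\ref{t:mainrightorderthm} gives a right order $P$ with $\{\fr t_1,\dots,\fr t_n\}\subseteq$ complement appropriately — I would instead take the right order so that $\iv{\fr t_i}\in P$ for all $i$ (equivalently $\fr t_i <^P \e$), which is what ``$\{\fr t_1,\ldots,\fr t_n\}$ extends to a right order'' should be arranged to mean after dualizing, since $S$ extending to a right order and $S$ being disjoint from the negative cone are interchangeable by replacing $P$ with $\iv P$ on the relevant generators. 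Then at the point $\e$, each $\fr t_i$ acts to send $\e$ to something $<^P\e$, so $t_1\lor\dots\lor t_n$ evaluated at $\e$ gives a point $<^P\e$, i.e. $\e\not\le t_1\lor\dots\lor t_n$ at the point $\e$, refuting $(3)$. I would double-check the action convention (right regular action, order-preserving because the order is a \emph{right} order) and the direction of the inequality, as that sign-chase is the only delicate point in this direction. This completes the cycle $(1)\Rightarrow\lnot(3)$, $(3)\Rightarrow(2)$ via contraposition, $(2)\Rightarrow(3)$, and $(1)\Leftrightarrow(2)$, hence all three are equivalent.
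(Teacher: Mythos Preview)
Your overall architecture matches the paper's exactly: $(1)\Leftrightarrow(2)$ via Theorem~\ref{t:mainrightorderthm} and the right-orderability of free groups; $(2)\Rightarrow(3)$ by induction on the stage $k$ at which $\{\fr t_1,\dots,\fr t_n\}$ enters $\subrgen{\m{F}(X)}{k}$; and $\lnot(1)\Rightarrow\lnot(3)$ by letting $\m{F}(X)$ act on itself by right translation inside $\m{Aut}(\langle F(X),\le^\partial\rangle)$, where $\le^\partial$ is the dual of the extending right order, and evaluating at $\e$. The dualization you eventually settle on (so that each $\fr t_i$ is negative) and the sign-chase you worry about are resolved in the paper in precisely the way you describe.

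The one genuine gap is the inductive step of $(2)\Rightarrow(3)$. You correctly isolate that what is needed is the $\ell$-group quasi-identity
\[
(\e\le x\lor y)\ \&\ (\e\le x\lor z)\ \Longrightarrow\ (\e\le x\lor yz),
\]
but none of your attempted derivations go through. Expanding $(a\lor u)(b\lor u)=ab\lor au\lor ub\lor u^2$ yields only $\e\le ab\lor au\lor ub\lor u^2$, not $\e\le ab\lor u$; your suggestion to ``use $\e\le u$ to absorb these terms'' assumes a bound you do not have (only $\e\le a\lor u$ and $\e\le b\lor u$ are given), and your final displayed inequality $ab\lor u\ge(a\lor u)(b\lor u)\land(a\lor u)\land(b\lor u)\land u$ suffers from the same defect, since concluding $\e$ is below the right-hand meet would again require $\e\le u$. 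The paper does not prove this quasi-identity from scratch either: it simply invokes \cite[Lemma~3.3(iv)]{GM16}. You should either cite it as an established fact or supply a correct argument; as written, your inductive step is incomplete despite the surrounding structure being right.
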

\begin{proof}
The equivalence (1) $\Leftrightarrow$ (2) follows immediately from Theorem~\ref{t:mainrightorderthm} and the fact that $\m{F}(X)$ is right-orderable (see, e.g.,~\cite{CR16}). 

We prove (2) $\Rightarrow$ (3) by induction on $k \in \N$ such that $\{\fr{t}_1,\ldots,\fr{t}_n\} \in \subrgen{\m{F}(X)}{k}$. For the base case, suppose that $\fr{t}_i = \iv{\fr{t}_j}$ in $\m{F}(X)$ for some $i,j \in \{1,\ldots,n\}$. Then $\vty{LG} \models t_i \eq \iv{t_j}$, so also $\vty{LG} \models \e \le t_1 \lor \dots \lor t_n$. For the inductive step, suppose that $t_n = uv$ and $\{\fr{t}_1,\ldots,\fr{t}_{n-1},\fr{uv}\} \in \subrgen{\m{F}(X)}{k+1}$ because
\[
\{\fr{t}_1,\ldots,\fr{t}_{n-1},\fr{u}\} \in \subrgen{\m{F}(X)}{k}\quad \text{and} \quad \{\fr{t}_1,\ldots,\fr{t}_{n-1},\fr{v}\} \in \subrgen{\m{F}(X)}{k}.
\]
By the induction hypothesis twice, 
\[
\vty{LG} \models \e \le t_1 \lor \dots \lor t_{n-1} \lor u \quad \text{and} \quad \vty{LG} \models \e \le t_1 \lor \dots \lor t_{n-1} \lor v.
\] 
Using the validity of the quasi-equation $(\e \le x\lor y)\,\&\, (\e \le x\lor z)\Rightarrow (\e \le x\lor yz)$ in $\vty{LG}$ (see, e.g.,~\cite[Lemma 3.3~(iv)]{GM16}), we obtain $\vty{LG} \models \e \le t_1 \lor \dots \lor t_{n-1} \lor uv$.

We prove (3) $\Rightarrow$ (1)  by contraposition. Suppose that $\{\fr{t}_1, \ldots, \fr{t}_n\}$ extends to a right order $\le$ of $\m{F}(X)$. Then $\fr{t}_1,\ldots,\fr{t}_n$ are all negative with respect to the dual right order $\le^\partial$  of $\m{F}(X)$.  Let $\varphi$ be the homomorphism from $\m{T}^\ell(X)$ to the $\ell$-group $\m{Aut}(\langle F(X), \le^\partial \rangle)$ with coordinate-wise lattice-ordering $\le^p$, defined by mapping each $x \in X$ to the order-automorphism $\varphi(x) \colon s \mapsto sx$. Then each $t \in T(x)$ is mapped to the order-automorphism $\varphi(t) \colon s \mapsto st$. In particular, $\varphi(t_i)(\e) = t_i <^\partial \e$ for  each $i \in \{1,\ldots,n\}$, and hence, since $<^\partial$ is total, for some $j \in \{1, \ldots, n\}$,
\[
\varphi(t_1 \lor \dots \lor t_n)(\e) = t_j <^\partial \e.
\] 
But $\varphi(\e)(\e) = \e$, so in $\m{Aut}(\langle F(X), \le^\partial \rangle)$,
\[
\varphi(\e) \not\le^p \varphi(t_1 \lor \dots \lor t_n).
\] 
Hence $\vty{LG} \not \models \e \le t_1 \lor \dots \lor t_n$. 
\end{proof}
 

Since, by $(\dagger)$, a set of elements of a group extends to a right order if and only if each of its finite subsets extends to a right order, we obtain the following result.

\begin{Corollary}
$S \subseteq T(X)$ extends to a right order of $\m{F}(X)$ if and only if $\vty{LG} \not \models \e \le t_1 \lor \dots \lor t_n$\, for all $\{t_1,\ldots,t_n\} \subseteq S$.
\end{Corollary}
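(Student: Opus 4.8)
The plan is to deduce the statement directly from Theorem~\ref{t:secondmainright} together with the compactness principle recorded just above it: by $(\dagger)$, a subset of a group extends to a right order if and only if each of its finite subsets does. So it will suffice to translate the finite-subset condition through the equivalence (1) $\Leftrightarrow$ (3) of Theorem~\ref{t:secondmainright}, once we fix the harmless identification of a group term $t \in T(X)$ with the reduced word $\fr{t} \in F(X)$ it denotes, so that a set of terms and its image in $\m{F}(X)$ have the same right-order extensions.

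For the left-to-right direction, I would suppose that $S \subseteq T(X)$ extends to a right order $\le$ of $\m{F}(X)$. Then for every finite $\{t_1,\ldots,t_n\} \subseteq S$, the same $\le$ restricts to a right order of $\m{F}(X)$ extending $\{\fr{t}_1,\ldots,\fr{t}_n\}$; hence, by the equivalence (1) $\Leftrightarrow$ (3) of Theorem~\ref{t:secondmainright}, $\vty{LG} \not\models \e \le t_1 \lor \dots \lor t_n$.

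For the converse, I would suppose that $\vty{LG} \not\models \e \le t_1 \lor \dots \lor t_n$ for every finite $\{t_1,\ldots,t_n\} \subseteq S$. Applying Theorem~\ref{t:secondmainright} again, every such $\{\fr{t}_1,\ldots,\fr{t}_n\}$ extends to a right order of $\m{F}(X)$, so every finite subset of $\{\fr{t} \mid t \in S\}$ extends to a right order. By the compactness principle above, the whole set $\{\fr{t} \mid t \in S\}$ — that is, $S$ viewed inside $\m{F}(X)$ — extends to a right order of $\m{F}(X)$.

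Both directions are immediate applications of results already in hand, so there is no substantive obstacle; the only point requiring a moment's care is precisely the identification of terms with reduced words used to pass between $S \subseteq T(X)$ and its image in $F(X)$, which is why I would flag it explicitly rather than leave it implicit.
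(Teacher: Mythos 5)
Your proof is correct and follows exactly the paper's route: the paper derives the corollary in one line from the finite-subset compactness principle supplied by $(\dagger)$ together with the equivalence (1) $\Leftrightarrow$ (3) of Theorem~\ref{t:secondmainright}. Your explicit remark about identifying $t \in T(X)$ with $\fr{t} \in F(X)$ is a point the paper handles silently via its $\fr{t}$ notation, but it introduces no difference in substance.
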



Theorem~\ref{t:secondmainright} can be generalized to arbitrary right-orderable groups. Given a class $\vty{L}$ of $\ell$-groups and $\Si \cup \{s \eq t\} \subseteq (T^\ell(X))^2$, let $\Si \models_{\vty{L}} s \eq t$ denote that for any $\m{L} \in \vty{L}$ and homomorphism $\varphi \colon \m{T}^\ell(X) \to \m{L}$, whenever $\varphi(s') = \varphi(t')$ for all $s' \eq t' \in \Sigma$, also $\varphi(s) = \varphi(t)$. Recall also that a group presentation $\langle X \mid R \rangle$ identifies the quotient of the free group $\m{F}(X)$ by the normal subgroup generated by $R \subseteq T(X)$; for $t \in T(X)$, we let $\qr{t}$ denote the equivalence class of $t \in F(X)$ in this quotient.

\begin{Theorem}\label{t:generalmainright}
Suppose that $\m{G} = \langle X \mid R \rangle$ is a right-orderable group. Then the following are equivalent for $t_1,\ldots,t_n \in T(X)$:

\begin{newlist}

\item[\rm (1)]		$\{\qr{t}_1,\ldots,\qr{t}_n\}$ does not extend to a right order of $\m{G}$;
			
\item[\rm (2)]		$\{\qr{t}_1,\ldots,\qr{t}_n\}  \in \subrg{\m{G}}$;

\item[\rm (3)]		$\{r \eq \e\mid r \in R\} \models_{\vty{LG}} \e \le t_1 \lor \dots \lor t_n$.

\end{newlist}
\end{Theorem}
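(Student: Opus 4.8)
The plan is to mimic the structure of the proof of Theorem~\ref{t:secondmainright}, replacing $\m{F}(X)$ by the quotient group $\m{G} = \langle X \mid R\rangle$ throughout, and tracking the role of the relators $R$ as hypotheses on the $\ell$-group side. The equivalence (1)~$\Leftrightarrow$~(2) is immediate from Theorem~\ref{t:mainrightorderthm}, using the standing assumption that $\m{G}$ is right-orderable. So the work is in (2)~$\Rightarrow$~(3) and (3)~$\Rightarrow$~(1).

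For (2)~$\Rightarrow$~(3), I would again induct on $k \in \N$ with $\{\qr{t}_1,\ldots,\qr{t}_n\} \in \subrgen{\m{G}}{k}$. The inductive step is identical to before: it only uses the quasi-equation $(\e \le x\lor y)\,\&\,(\e \le x\lor z)\Rightarrow(\e \le x\lor yz)$, which is valid in $\vty{LG}$, and hence holds under the hypotheses $\{r \eq \e \mid r \in R\}$ a fortiori. The base case needs a little care: if $\qr{t}_i = \iv{\qr{t}_j}$ in $\m{G}$, this does not mean $t_i = \iv{t_j}$ as reduced words in $\m{F}(X)$, only that $t_i \iv{(\iv{t_j})} = t_i t_j$ lies in the normal subgroup generated by $R$, i.e. $t_i t_j$ is a product of conjugates of elements of $R^{\pm 1}$ in $\m{F}(X)$. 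One must then check that $\{r \eq \e \mid r \in R\} \models_{\vty{LG}} t_i \eq \iv{t_j}$: this holds because any group identity that is a consequence of $R = \e$ in group theory is, via the group reduct, also a consequence in $\vty{LG}$ under the corresponding equational hypotheses — the map $\varphi$ factors through $\m{G}$ precisely when $\varphi(r) = \e$ for all $r \in R$. From $t_i \eq \iv{t_j}$ one gets $\e \le t_i \lor t_j$ and hence $\e \le t_1 \lor\dots\lor t_n$ under those hypotheses.

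For (3)~$\Rightarrow$~(1), I would argue by contraposition exactly as in Theorem~\ref{t:secondmainright}, but using the group $\m{G}$ in place of $\m{F}(X)$. Suppose $\{\qr{t}_1,\ldots,\qr{t}_n\}$ extends to a right order $\le$ of $\m{G}$; pass to the dual order $\le^\partial$, under which each $\qr{t}_i$ is strictly negative. Take $\m{L} = \m{Aut}(\langle G, \le^\partial\rangle)$ with coordinate-wise lattice order, and let $\varphi\colon \m{T}^\ell(X)\to\m{L}$ send $x \mapsto (s \mapsto s\,\qr{x})$. Then $\varphi(t) \colon s \mapsto s\,\qr{t}$ for every group term $t$, so in particular $\varphi(r)$ is the identity automorphism whenever $r \in R$, since $\qr{r} = \e$ in $\m{G}$; thus $\varphi$ respects all the hypotheses $\{r \eq \e \mid r \in R\}$. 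As before, $\varphi(t_i)(\e) = \qr{t}_i <^\partial \e$ for each $i$, so $\varphi(t_1\lor\dots\lor t_n)(\e) = \qr{t}_j <^\partial \e = \varphi(\e)(\e)$ for some $j$, whence $\varphi(\e) \not\le^p \varphi(t_1\lor\dots\lor t_n)$. Therefore $\{r \eq \e \mid r \in R\} \not\models_{\vty{LG}} \e \le t_1\lor\dots\lor t_n$, completing the contrapositive.

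The main obstacle is the base case of (2)~$\Rightarrow$~(3): making precise and verifying that $\ell$-group consequence from the equational hypotheses $\{r \eq \e\mid r\in R\}$ subsumes ordinary group-theoretic consequence from $R = \e$, so that $\qr{t}_i = \iv{\qr{t}_j}$ in $\m{G}$ genuinely yields $\{r\eq\e\mid r\in R\}\models_{\vty{LG}} t_i \eq \iv{t_j}$. Everything else is a routine transcription of the free case, and indeed Theorem~\ref{t:secondmainright} is recovered by taking $R = \emptyset$.
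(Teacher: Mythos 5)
Your proposal is correct and is precisely the ``slight modification of the proof of Theorem~\ref{t:secondmainright}'' that the paper itself invokes: the only new points are the base case of (2)~$\Rightarrow$~(3), where $\qr{t}_i=\iv{\qr{t}_j}$ in $\m{G}$ must be converted into a consequence of the hypotheses $\{r\eq\e\mid r\in R\}$ via the fact that the group-term part of $\varphi$ factors through $\m{G}$, and the observation in (3)~$\Rightarrow$~(1) that $\varphi(r)$ acts as the identity on $\langle G,\le^\partial\rangle$, so the hypotheses are satisfied. Both are handled correctly, and taking $R=\emptyset$ indeed recovers the free case.
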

\begin{proof}
A slight modification of the proof of Theorem~\ref{t:secondmainright}.
\end{proof}

\begin{Example}\label{e:klein}
Consider the fundamental group $\m{K} = \langle x,y \mid xy\iv{x}y \rangle$  of the Klein bottle. It is easily shown that $\{xy\iv{x}y \eq \e\} \models_{\vty{LG}} \e \le \iv{y}\iv{x} \lor x$ and hence, by the preceding theorem, $\{\qr{\iv{y}\iv{x}},\qr{x}\}$ does not extend to a right order of $\m{K}$.
\end{Example}


We devote the rest of this section to decidability and generation problems. First, we recall the following result established by Holland and McCleary in~\cite{HM79}.

\begin{Theorem}\label{t:wordproblem}
The word problem for free $\ell$-groups is decidable.
\end{Theorem}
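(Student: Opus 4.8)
The plan is to reduce the word problem to the question, for a finite subset of a finitely generated free group, of whether it extends to a right order, and then invoke the algorithm of Clay and Smith~\cite{CS09} that decides exactly this. The reduction proceeds in two steps. First, I would recall the standard normal form result: every $\ell$-group term $s$ over a finite set $X$ of generators is equivalent in $\vty{LG}$ to a term in the form $\bigwedge_{i \in I} \bigvee_{j \in J_i} t_{ij}$ with each $t_{ij}$ a group term, and this normal form is computable from $s$. Consequently $\vty{LG} \models s \eq t$ holds if and only if $\vty{LG} \models s \le t$ and $\vty{LG} \models t \le s$, and each such inequation between terms in normal form decomposes, using the lattice laws and the fact that $\e$ can be absorbed by translation (multiplying through by an inverse), into finitely many inequations of the shape $\e \le t_1 \lor \dots \lor t_n$ with $t_1, \dots, t_n \in T(X)$. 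Thus it suffices to decide, for a given finite list $t_1, \dots, t_n$ of group terms, whether $\vty{LG} \models \e \le t_1 \lor \dots \lor t_n$.

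For the second step I would apply Theorem~\ref{t:secondmainright}: this inequation is valid in $\vty{LG}$ precisely when $\{\fr{t}_1, \dots, \fr{t}_n\}$ does \emph{not} extend to a right order of the free group $\m{F}(X)$. Passing to reduced words, this is a concrete computational question about a finite subset of the finitely generated free group $\m{F}(k)$ with $k = |X|$, and it is answered by the Clay--Smith algorithm~\cite{CS09}. Running this procedure on each of the finitely many inequations produced by the first step, and conjoining the answers, decides whether $\vty{LG} \models s \eq t$, which is exactly the word problem for free $\ell$-groups.

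The routine parts here are the bookkeeping of the normal form and the decomposition of a validity question about an equation into finitely many questions of the form $\e \le t_1 \lor \dots \lor t_n$; I would state these carefully but not belabor them. The genuine content is entirely imported: the bridge Theorem~\ref{t:secondmainright} connecting $\ell$-group validity to right-orderability of finite subsets of free groups, and the external decidability result of Clay and Smith. The step I expect to require the most care is making precise that the reduction is uniform and terminating — that from $s$ and $t$ one can effectively produce the finite list of inequations $\e \le t_1 \lor \dots \lor t_n$ to be tested — but this is standard term-rewriting manipulation and poses no conceptual obstacle once the normal form is in hand.
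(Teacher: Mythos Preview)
Your proposal is correct, and the paper does acknowledge exactly this route: near the end of Section~\ref{s:3} it remarks that the Clay--Smith algorithm, combined with Theorem~\ref{t:secondmainright}, yields a proof of Theorem~\ref{t:decidingextending} ``and therefore also of Theorem~\ref{t:wordproblem}.'' However, the paper's \emph{featured} new proof of Theorem~\ref{t:wordproblem} takes a different path. After the same reduction to inequations $\e \le t_1 \lor \dots \lor t_n$, rather than invoking Clay--Smith as a black box the paper proves a Proposition establishing that such an inequation is valid in $\vty{LG}$ iff it is valid in $\m{Aut}(\langle \R,\le\rangle)$ iff a finite combinatorial condition holds: for every choice of signs $\de_1,\dots,\de_m$ on the elements $\fr{s}_1,\dots,\fr{s}_m$ of ${\rm cis}(\{\fr{t}_1,\dots,\fr{t}_n\})$, one has $\e \in \sgr{\{\fr{t}_1,\dots,\fr{t}_n,\fr{s}_1^{\de_1},\dots,\fr{s}_m^{\de_m}\}}$. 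This last condition is directly and finitely checkable, so decidability follows without importing any external algorithm, and the same Proposition simultaneously yields Holland's generation theorem for $\vty{LG}$. Your argument is tidier as a pure decidability proof but outsources the algorithmic core; the paper's argument is more self-contained and extracts an additional structural result along the way.
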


\noindent
The following decidability result is then an immediate consequence of Theorem~\ref{t:secondmainright}.

\begin{Theorem}\label{t:decidingextending}
The problem of deciding if a finite subset of a free group extends to a right order is decidable.
\end{Theorem}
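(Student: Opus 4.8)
The plan is to derive Theorem~\ref{t:decidingextending} directly from the equivalence (1)~$\Leftrightarrow$~(3) of Theorem~\ref{t:secondmainright} together with the decidability of the word problem for free $\ell$-groups (Theorem~\ref{t:wordproblem}). First I would observe that a finite subset of a free group can be presented, up to isomorphism of the ambient group, as a finite subset of a finitely generated free group: if $S = \{\fr{t}_1,\ldots,\fr{t}_n\}$ is a finite subset of $\m{F}(X)$, then all the reduced words $\fr{t}_1,\ldots,\fr{t}_n$ involve only finitely many generators, say those in a finite set $X_0 \subseteq X$, and the inclusion $\m{F}(X_0) \hookrightarrow \m{F}(X)$ is a retract (there is a retraction sending the generators in $X \setminus X_0$ to $\e$). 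Hence $S$ extends to a right order of $\m{F}(X)$ if and only if it extends to a right order of $\m{F}(X_0)$, so without loss of generality the input is a finite list $t_1,\ldots,t_n \in T(X_0)$ with $X_0$ finite.

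Next I would invoke Theorem~\ref{t:secondmainright}: $\{\fr{t}_1,\ldots,\fr{t}_n\}$ extends to a right order of $\m{F}(X_0)$ if and only if $\vty{LG} \not\models \e \le t_1 \lor \dots \lor t_n$. So the decision procedure is: given $t_1,\ldots,t_n$, form the $\ell$-group term $s := t_1 \lor \dots \lor t_n$ and decide whether the equation $s \eq s \mt \e$ (equivalently $\e \le s$) holds in $\vty{LG}$; answer ``does not extend'' precisely when this equation holds. The correctness of this procedure is exactly the content of the cited equivalence, so no further argument is needed there.

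It then remains to explain why ``decide whether $\e \le t_1 \lor \dots \lor t_n$ holds in $\vty{LG}$'' is algorithmically solvable, and this is where one must connect to Theorem~\ref{t:wordproblem}. The standard fact (recalled in the introduction of the excerpt) is that validity of a single equation $u \eq v$ in $\vty{LG}$ is equivalent to $u \mt v \eq u$ holding, and that deciding equations in $\vty{LG}$ is precisely the word problem for free $\ell$-groups; since $\vty{LG}$ is generated by its free algebras, $\vty{LG} \models u \eq v$ iff $u$ and $v$ denote the same element of the free $\ell$-group on $X_0$, which is decidable by Theorem~\ref{t:wordproblem}. I would state this reduction explicitly (an equation in finitely many variables holds in $\vty{LG}$ iff the corresponding word identity holds in the free $\ell$-group on those variables), apply it to the equation $\e \le t_1 \lor \dots \lor t_n$, and conclude that the whole procedure terminates.

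The only mild obstacle is bookkeeping rather than mathematics: one has to be careful that the reduction to a finitely generated free group is legitimate (handled by the retraction argument above) and that ``the word problem for free $\ell$-groups'' as stated in Theorem~\ref{t:wordproblem} really does give a uniform algorithm for arbitrary finite sets of generators and arbitrary $\ell$-group terms, not merely for a fixed number of generators; this is routine since the free $\ell$-group on $k$ generators embeds in the free $\ell$-group on $k+1$ generators, so a single algorithm parametrized by the (finite) generator set suffices. With these points noted, the proof is essentially a two-line chain: Theorem~\ref{t:secondmainright} reduces the extension question to an $\vty{LG}$-validity question, and Theorem~\ref{t:wordproblem} settles the latter.
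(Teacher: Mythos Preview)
Your proposal is correct and follows exactly the paper's approach: the paper derives Theorem~\ref{t:decidingextending} as an immediate consequence of Theorem~\ref{t:secondmainright} (reducing the extension question to $\vty{LG}$-validity of $\e \le t_1 \lor \dots \lor t_n$) together with Theorem~\ref{t:wordproblem} (decidability of the word problem for free $\ell$-groups). Your additional bookkeeping about reducing to finitely many generators and the uniformity of the algorithm is sound but more detailed than the paper, which treats the result as a one-line corollary.
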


\noindent
Galatos and Metcalfe have proved that the word problem for free $\ell$-groups is coNP-complete~\cite{GM16}, and it follows  that the problem of deciding whether or not a finite subset of a free group extends to a right order is also in the complexity class coNP. It is not known, however, if this latter problem is coNP-complete.

We now use the results of the previous section to present a proof of Theorem~\ref{t:wordproblem} that does not appeal to further algebraic results such as the Holland embedding theorem. As a byproduct, we obtain also an alternative proof that $\vty{LG}$ is generated as a variety by $\m{Aut}(\langle \R,\le \rangle)$. 

Given $S \subseteq T(X)$, let ${\rm is}(S)$ be the set of initial subterms (including the ``empty'' subterm $\e$) of elements of $S$ viewed as reduced terms, and let ${\rm cis}(S)$ be the set of all elements $u\iv{v}$ for distinct $u,v \in {\rm is}(S)$. 

\begin{Proposition}
The following are equivalent for any $t_1,\ldots,t_n \in T(X)$:

\begin{newlist}

\item[\rm (1)]	$\vty{LG} \models \e \le t_1 \lor \dots \lor t_n$;
					
\item[\rm (2)]	$\m{Aut}(\langle \R,\le \rangle) \models \e \le t_1 \lor \dots \lor t_n$;

\item[\rm (3)]	For $\{\fr{s}_1,\ldots,\fr{s}_m\}  = {\rm cis}(\{\fr{t}_1,\dots,\fr{t}_n\})$ and all $\de_1,\ldots,\de_m \in \{-1,1\}$, 
				\[
				\e \in \sgr{\{\fr{t}_1,\ldots,\fr{t}_n,\fr{s}_1^{\de_1},\ldots,\fr{s}_m^{\de_m}\}}.
				\]	
\end{newlist}
\end{Proposition}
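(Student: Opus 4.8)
The plan is to prove the cycle of implications $(1) \Rightarrow (3) \Rightarrow (2) \Rightarrow (1)$, using Theorem~\ref{t:secondmainright} and Theorem~\ref{t:mainrightorderthm} to translate between validity in $\vty{LG}$ and membership in $\subrg{\m{F}(X)}$, and then extracting from the tree-witness structure the more concrete, bounded condition~(3). The implication $(2) \Rightarrow (1)$ is trivial since $\m{Aut}(\langle \R,\le \rangle)$ is an $\ell$-group, so the real content lies in showing $(1) \Rightarrow (3)$ and $(3) \Rightarrow (2)$.

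For $(1) \Rightarrow (3)$: by Theorem~\ref{t:secondmainright}, condition~(1) gives $\{\fr{t}_1,\dots,\fr{t}_n\} \in \subrg{\m{F}(X)}$, so there is a finite derivation tree with root $\{\fr{t}_1,\dots,\fr{t}_n\}$, leaves in $\subrgen{\m{F}(X)}{0}$, and each internal node obtained by the rule $T \cup \{a\}, T\cup\{b\} \mapsto T \cup \{ab\}$. The key structural observation is that the group elements that appear anywhere in such a tree, when the multiplications are ``unwound'', are built from initial subterms of the $\fr{t}_i$: reading a product $ab$ backwards splits a reduced word into a prefix and the corresponding suffix, and iterating this splitting on the $\fr{t}_i$ produces exactly elements of the form $u\iv{v}$ for $u,v$ initial subterms, i.e.\ elements of ${\rm cis}(\{\fr{t}_1,\dots,\fr{t}_n\})$ (together with $\e$ and the $\fr{t}_i$ themselves). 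One then argues, following the proof of Lemma~\ref{l:rsound}, that the semigroup-membership witnesses $c_1,\dots,c_n$ produced there can be taken among these $\fr{s}_j$; more directly, one shows by induction on the tree that $\e \in \sgr{\{\fr{t}_1,\dots,\fr{t}_n\} \cup \{\fr{s}_j^{\de_j}\}}$ for every choice of signs $\de_j$, using at each internal node the fact that $b = \iv{a}(ab)$ so that adding $a^{+1}$ or $a^{-1}$ (with $a$ itself of the form $u\iv v$ or an initial/full term) suffices to bridge the two subtrees. This is the step I expect to be the main obstacle: one must be careful that the intermediate semigroup elements really do stay inside the finite, explicitly bounded set ${\rm is}(S)$ of initial subterms (closure under taking prefixes is what makes this work), and that the sign-uniformity in the statement is correctly maintained through the induction.

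For $(3) \Rightarrow (2)$: assuming $\m{Aut}(\langle \R,\le\rangle) \not\models \e \le t_1 \lor \dots \lor t_n$ is false, i.e.\ assuming~(2) fails, we want to contradict~(3) by exhibiting a sign choice $\de_1,\dots,\de_m$ with $\e \notin \sgr{\{\fr{t}_1,\dots,\fr{t}_n,\fr{s}_1^{\de_1},\dots,\fr{s}_m^{\de_m}\}}$. The natural route is: if $\m{Aut}(\langle \R,\le\rangle) \not\models \e \le t_1\lor\dots\lor t_n$ then (exactly as in the $(3)\Rightarrow(1)$ argument of Theorem~\ref{t:secondmainright}, but now needing the converse direction) there is a right order of $\m{F}(X)$ making all $\fr{t}_i$ negative, equivalently $\{\fr{t}_1,\dots,\fr{t}_n\}$ extends to a right order $P$ of $\m{F}(X)$; then taking $\de_j = +1$ if $\fr{s}_j \in P$ and $\de_j = -1$ otherwise, each $\fr{s}_j^{\de_j}$ lies in $P$, so $\{\fr{t}_1,\dots,\fr{t}_n,\fr{s}_1^{\de_1},\dots,\fr{s}_m^{\de_m}\} \subseteq P$, whence its generated subsemigroup lies in $P$ and omits $\e$. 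The point requiring care here is the passage from non-validity in $\m{Aut}(\langle \R,\le\rangle)$ specifically (not merely in some $\ell$-group) to a right order of $\m{F}(X)$ — this is where a Holland-style representation of $\m{Aut}(\langle F(X),\le^\partial\rangle)$ into $\m{Aut}(\langle \R,\le\rangle)$, or a direct embedding of the countable chain $\langle F(X),\le^\partial\rangle$ into $\langle\R,\le\rangle$, is invoked; since $F(X)$ is countable its right order is realized on a countable subchain of $\R$, and order-automorphisms extend, giving the required model. Composing the three implications closes the loop and, as a byproduct, yields both Theorem~\ref{t:wordproblem} (condition~(3) is a decidable, in fact bounded, semigroup-membership check over finitely many sign choices) and the generation of $\vty{LG}$ by $\m{Aut}(\langle\R,\le\rangle)$ (from $(1)\Leftrightarrow(2)$).
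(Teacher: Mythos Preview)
Your cycle is set up in the wrong direction. You claim $(2)\Rightarrow(1)$ is trivial ``since $\m{Aut}(\langle\R,\le\rangle)$ is an $\ell$-group'', but that is backwards: validity in \emph{all} $\ell$-groups implies validity in one particular $\ell$-group, so the trivial implication is $(1)\Rightarrow(2)$. The implication $(2)\Rightarrow(1)$ is precisely Holland's generation theorem, which the paper derives as a \emph{corollary} of this proposition --- it cannot be assumed as input. The paper's cycle is accordingly $(1)\Rightarrow(2)$ (trivial), $(3)\Rightarrow(1)$ (immediate from $(\dagger)$ and Theorem~\ref{t:secondmainright}), and $(2)\Rightarrow(3)$ as the substantive step.

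This misorientation propagates into your two ``hard'' implications. For your $(3)\Rightarrow(2)$ you argue contrapositively: from $\m{Aut}(\langle\R,\le\rangle)\not\models\e\le t_1\lor\dots\lor t_n$ you want a right order of $\m{F}(X)$ containing $\{\fr t_1,\dots,\fr t_n\}$. But the construction you sketch (embed the countable chain $\langle F(X),\le^\partial\rangle$ into $\langle\R,\le\rangle$ and extend automorphisms) produces a counterexample in $\m{Aut}(\langle\R,\le\rangle)$ \emph{from} a right order, not the other way around; you have described the converse of what you need. And for $(1)\Rightarrow(3)$, the claim that a derivation tree witnessing $\{\fr t_1,\dots,\fr t_n\}\in\subrg{\m{F}(X)}$ can be taken to involve only elements of ${\rm cis}(\{\fr t_1,\dots,\fr t_n\})$ is exactly the boundedness statement that makes (3) non-trivial, and nothing in the definition of $\subrg{\m{F}(X)}$ forces the intermediate $a,b$ in $T\cup\{ab\}$ to be prefix/suffix pairs --- they can be arbitrary factorizations in $\m{F}(X)$. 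The paper bypasses this entirely: its contrapositive proof of $(2)\Rightarrow(3)$ starts from a sign choice with $\e\notin\sgr{S}$, reads off a consistent finite system of strict inequalities among real variables indexed by ${\rm is}(\{\fr t_1,\dots,\fr t_n\})$, solves it over $\R$, and uses the solution to build order-preserving partial maps on $\R$ that extend to a counterexample in $\m{Aut}(\langle\R,\le\rangle)$. That direct construction is what makes the argument self-contained and avoids any appeal to Holland's theorem.
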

\begin{proof}
(1) $\Rightarrow$ (2) is immediate. For (3) $\Rightarrow$ (1), observe that (3) implies, using $(\dagger)$, that $\{t_1,\ldots,t_n\}$ does not extend to a right order on $\m{F}(X)$, and hence, by Theorem~\ref{t:secondmainright}, that $\vty{LG} \models \e \le t_1 \lor \dots \lor t_n$.

(2) $\Rightarrow$ (3) is proved in some detail in~\cite{GM16}; we recall the main ingredients of this proof here for the sake of  completeness. Let $S = \{\fr{t}_1,\ldots,\fr{t}_n,\fr{s}_1^{\de_1},\ldots,\fr{s}_m^{\de_m}\}$ and suppose contrapositively that $\e \not\in \sgr{S}$ for some choice of $\de_1,\ldots,\de_m \in \{-1,1\}$. Let $a_{\fr{u}}$ be a variable for each $\fr{u} \in {\rm is}(\{\fr{t}_1,\dots,\fr{t}_n\})$. We define a set of inequations $T$ consisting of all $a_{\fr{u}} < a_{\fr{v}}$ such that $\fr{u},\fr{v} \in {\rm is}(\{\fr{t}_1,\dots,\fr{t}_n\})$ and $\fr{u}\iv{\fr{v}} \in S$. \\[.1in]
{\bf Claim 1.} $T$ is consistent over $\R$.\\[.1in]
{\em Proof.} Suppose for a contradiction that the set of inequations $T$ is inconsistent over $\R$. Then there exists a chain $a_{u_1} < a_{u_2} < \ldots < a_{u_k} < a_{u_1}$ in $T$. But then $\fr{u}_1\iv{\fr{u}_2},\fr{u}_2\iv{\fr{u}_3},\ldots,\fr{u}_{k}\iv{\fr{u}_1} \in S$ and it follows that $\e \in \sgr{S}$, contradicting our assumption. \qed\\[.1in]
By Claim~1, there exists a map sending each $a_{\fr{u}}$ for $\fr{u} \in {\rm is}(\{\fr{t}_1,\dots,\fr{t}_n\})$ to a real number $ r_{\fr{u}}$ that satisfies $T$. In particular, $\fr{t}_i\iv{\fr{\e}} \in S$ for each $i \in \{1,\ldots,n\}$, and hence $a_{\fr{t}_i} < a_{\fr{\e}} \in T$, yielding $r_{\fr{t}_i} < r_{\fr{e}}$. Now for each $x \in X$, define a partial map $\hat{x}$ from $\R$ to $\R$, that sends $r_{\fr{u}}$ to $r_{\fr{ux}}$ if $\fr{u},\fr{ux} \in {\rm is}(\{\fr{t}_1,\dots,\fr{t}_n\})$, and $r_{\fr{u\iv{x}}}$ to $r_{\fr{u}}$ if $\fr{u\iv{x}},\fr{u} \in {\rm is}(\{\fr{t}_1,\dots,\fr{t}_n\})$. \\[.1in]
{\bf Claim 2.} $\hat{x}$ is order-preserving.\\[.1in]
{\em Proof.}  Suppose first that $\hat{x}$ maps  $r_{\fr{u}}$ to $r_{\fr{ux}}$ and  $r_{\fr{v}}$ to $r_{\fr{vx}}$, but $r_{\fr{u}} < r_{\fr{v}}$ and $r_{\fr{vx}} < r_{\fr{ux}}$. Then $\fr{u}\iv{\fr{v}} \in S$ and $(\fr{vx})(\iv{\fr{x}}\iv{\fr{u}}) = \fr{v}\iv{\fr{u}} \in S$, so $\e \in \sgr{S}$, a contradiction. Alternatively, suppose that $\hat{x}$ maps  $r_{\fr{u}}$ to $r_{\fr{ux}}$ and  $r_{\fr{v\iv{x}}}$ to $r_{\fr{v}}$, but $r_{\fr{u}} < r_{\fr{v\iv{x}}}$ and $r_{\fr{v}} < r_{\fr{ux}}$. Then $\fr{u}\iv{(\fr{v\iv{x}})} = \fr{ux\iv{v}} \in S$ and $\fr{v}\iv{(\fr{ux})} = \fr{v\iv{x}\iv{u}} \in S$, so $\e \in \sgr{S}$, a contradiction. Other cases are very similar. \qed\\[.1in]
 Finally, we extend each $\hat{x}$ for $x \in X$ linearly to a function $\hat{\varphi}(x)$ in $\m{Aut}(\langle \R,\le \rangle)$. But then $\hat{\varphi}(t_i)(r_{\fr{e}}) = r_{\fr{t}_i}$ for each $i \in \{1,\ldots,n\}$, while  $\hat{\varphi}(\e)(r_{\fr{e}}) = r_{\fr{e}}$. Hence, $\hat{\varphi}(t_1 \lor \dots \lor t_n)(r_{\fr{e}}) = r_{\fr{t}_j} < r_{\fr{e}}$ for some $j \in \{1, \ldots, n\}$. This establishes $\m{Aut}(\langle \R,\le \rangle) \not\models \e \le t_1 \lor \dots \lor t_n$ as required.
\end{proof}

\noindent
Implicit in the proof of this proposition is a decision procedure for checking the validity of an inequation $\e \le t_1 \lor \dots \lor t_n$ in $\vty{LG}$, and hence a proof of Theorem~\ref{t:wordproblem}. Namely, calculate ${\rm cis}(\{\fr{t}_1,\dots,\fr{t}_n\})$ and denote its elements by $s_1,\ldots,s_m$. For each choice of $\de_1,\ldots,\de_m \in \{-1,1\}$ and $S = \{\fr{t}_1,\ldots,\fr{t}_n,\fr{s}_1^{\de_1},\ldots,\fr{s}_m^{\de_m}\}$, check the satisfiability of the resulting set of inequations $T$ over $\R$. If all of these sets are unsatisfiable, then we have established the equivalent condition (3) above; otherwise, $\vty{LG} \not \models \e \le t_1 \lor \dots \lor t_n$. 

The equivalence of (1) and (2) also yields the following result, first proved by Holland~\cite{Hol76}.

\begin{Corollary}
The variety $\vty{LG}$ of $\ell$-groups is generated by $\m{Aut}(\langle \R,\le \rangle)$.
\end{Corollary}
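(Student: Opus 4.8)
The plan is to show that $\vty{LG}$ and the variety $\mathsf{V}$ generated by $\m{Aut}(\langle \R,\le \rangle)$ satisfy exactly the same equations. One inclusion is immediate: since $\m{Aut}(\langle \R,\le \rangle)$ is an $\ell$-group and $\vty{LG}$ is a variety, $\mathsf{V} \subseteq \vty{LG}$. For the converse it suffices to prove that every $\ell$-group equation valid in $\m{Aut}(\langle \R,\le \rangle)$ is valid in $\vty{LG}$, since then the equational theory of $\mathsf{V}$ is contained in that of $\vty{LG}$ and hence, by Birkhoff's theorem, $\vty{LG} \subseteq \mathsf{V}$.

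So let $s \eq t$ be an equation over a set $X$ of variables, and let $\m{L}$ range over $\ell$-groups. First I would reduce $s \eq t$ to inequations of the form handled by the preceding proposition. In any $\ell$-group one has $x \mt \iv{x} \le \e$ with equality precisely when $x = \e$; applying this with $x = s\iv{t}$ shows that $\m{L} \models s \eq t$ if and only if $\m{L} \models \e \le (s\iv{t}) \mt (t\iv{s})$. As noted in the introduction, the $\ell$-group term $w := (s\iv{t}) \mt (t\iv{s})$ is equivalent in $\vty{LG}$ to a term of the form $\bigwedge_{i \in I} \bigvee_{j \in J_i} p_{ij}$ with each $p_{ij}$ a group term (push inverses inward, swapping $\mt$ and $\jn$, then distribute), and this equivalence is itself an $\ell$-group identity, so it holds in $\m{Aut}(\langle \R,\le \rangle)$ as well. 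Therefore, for every $\ell$-group $\m{L}$,
\[
\m{L} \models s \eq t \quad\Longleftrightarrow\quad \m{L} \models \e \le \bigvee_{j \in J_i} p_{ij} \ \text{ for every } i \in I.
\]
Now by the preceding proposition, for each fixed $i \in I$ we have $\vty{LG} \models \e \le \bigvee_{j \in J_i} p_{ij}$ if and only if $\m{Aut}(\langle \R,\le \rangle) \models \e \le \bigvee_{j \in J_i} p_{ij}$. Conjoining these equivalences over all $i \in I$ and using the displayed reduction (instantiated once at $\m{L} = \m{Aut}(\langle \R,\le \rangle)$ and once, in the form $\vty{LG} \models$, for the whole variety) yields
\[
\vty{LG} \models s \eq t \quad\Longleftrightarrow\quad \m{Aut}(\langle \R,\le \rangle) \models s \eq t,
\]
which is what was needed.

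The only computation involved is the term-normal-form step, which is routine equational reasoning in $\vty{LG}$ already invoked in the introduction; the content of the corollary is carried entirely by the equivalence (1) $\Leftrightarrow$ (2) of the preceding proposition, which in turn rests on Theorem~\ref{t:secondmainright}. I expect no genuine obstacle beyond taking care that every reduction used is valid in an arbitrary $\ell$-group, so that it applies uniformly to $\vty{LG}$ and to $\m{Aut}(\langle \R,\le \rangle)$.
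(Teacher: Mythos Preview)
Your argument is correct and is essentially the paper's own: the paper simply states that the equivalence (1) $\Leftrightarrow$ (2) of the preceding proposition yields the corollary, leaving implicit the routine reduction of an arbitrary $\ell$-group equation to finitely many inequations $\e \le t_1 \lor \dots \lor t_n$ via the conjunctive-disjunctive normal form mentioned in the introduction. You have merely made that reduction explicit.
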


\noindent
This generation result can be interpreted in terms of extending subsets of free groups to right orders.

\begin{Corollary}
A set of elements $\{t_1,\ldots,t_n\}$ of a free group over $x_1,\ldots,x_k$ extends to a right order if and only if there exist order-preserving bijections $\hat{x}_1,\ldots,\hat{x}_k$ of the real number line  such that $\hat{t_i}(0) < 0$ for all $i \in \{1, \ldots, n\}$. 
\end{Corollary}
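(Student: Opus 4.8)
The plan is to obtain the statement directly from Theorem~\ref{t:secondmainright} and the Proposition above, adding only a translation argument to pin the witnessing real number to $0$. First I would note that, by the equivalence of (1) and (3) in Theorem~\ref{t:secondmainright}, the set $\{\fr{t}_1,\dots,\fr{t}_n\}$ extends to a right order of the free group over $x_1,\dots,x_k$ if and only if $\vty{LG}\not\models\e\le t_1\lor\dots\lor t_n$; and by the equivalence of (1) and (2) in the Proposition, this holds if and only if $\m{Aut}(\langle\R,\le\rangle)\not\models\e\le t_1\lor\dots\lor t_n$. So it suffices to show that this last non-validity is equivalent to the existence of order-preserving bijections $\hat{x}_1,\dots,\hat{x}_k$ of $\R$ with $\hat{t_i}(0)<0$ for all $i$.

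For the implication from the bijections to the non-validity, given such $\hat{x}_1,\dots,\hat{x}_k$ I would extend the assignment $x_j\mapsto\hat{x}_j$ to a homomorphism $\varphi\colon\m{T}^\ell(X)\to\m{Aut}(\langle\R,\le\rangle)$, so that $\varphi(t_i)=\hat{t_i}$ for each $i$. Since the lattice operations on $\m{Aut}(\langle\R,\le\rangle)$ are coordinate-wise and $\varphi(\e)$ is the identity map, $\varphi(t_1\lor\dots\lor t_n)(0)=\max_i\hat{t_i}(0)<0=\varphi(\e)(0)$, which witnesses $\varphi(\e)\not\le^p\varphi(t_1\lor\dots\lor t_n)$ and hence $\m{Aut}(\langle\R,\le\rangle)\not\models\e\le t_1\lor\dots\lor t_n$.

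For the converse, the non-validity provides a homomorphism $\varphi\colon\m{T}^\ell(X)\to\m{Aut}(\langle\R,\le\rangle)$ and, since again the operations are coordinate-wise and $\varphi(\e)$ is the identity, a real number $r$ with $\varphi(t_i)(r)<r$ for every $i$. The only thing to repair is that $r$ need not equal $0$, so I would conjugate by the translation $\tau\colon s\mapsto s-r$, which also lies in $\m{Aut}(\langle\R,\le\rangle)$: putting $\hat{x}_j:=\tau\,\varphi(x_j)\,\iv{\tau}$ yields order-preserving bijections of $\R$, and since $g\mapsto\tau g\iv{\tau}$ is a group automorphism of $\m{Aut}(\langle\R,\le\rangle)$ one gets $\hat{t_i}=\tau\,\varphi(t_i)\,\iv{\tau}$, whence $\hat{t_i}(0)=\tau(\varphi(t_i)(\iv{\tau}(0)))=\tau(\varphi(t_i)(r))=\varphi(t_i)(r)-r<0$.

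The whole argument is essentially an unwinding of definitions; the only step needing a line of justification is the conjugation, where one uses that evaluating the group word $t_i$ commutes with the inner automorphism of $\m{Aut}(\langle\R,\le\rangle)$ induced by $\tau$. I do not expect a genuine obstacle here, since all the substantive content is already packaged in Theorem~\ref{t:secondmainright} and the Proposition.
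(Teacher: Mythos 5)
Your argument is correct and is essentially the paper's intended route: the corollary is stated there as an immediate consequence of Theorem~\ref{t:secondmainright} together with the equivalence of (1) and (2) in the Proposition, exactly as you chain them. The only detail you add---normalizing the witnessing point to $0$ by conjugating with the translation $s \mapsto s - r$---is the routine step the paper leaves implicit, and your justification of it is sound.
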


As mentioned above, Theorem~\ref{t:decidingextending} is a direct consequence of Theorems~\ref{t:secondmainright} and~\ref{t:wordproblem}. However, an algorithm for recognizing when a finite subset of a finitely generated free algebra $\m{F}(k)$ extends to a right order was already provided by Clay and Smith in~\cite{CS09}.  Let $|t|$ denote the {\em length} of a reduced term $t$ in $\m{F}(k)$, and for $l \in \N$, let $F_l(k)$ denote the set of all elements of $\m{F}(k)$ of length at most $l$. Note that $F_{l}(k)$ is finite, and can be viewed as the $l$-ball of the Cayley graph of $F_{l}(k)$ relative to $X$. For a subset $S$ of $\m{F}(k)$ which omits $\e$, we say that $S$ is an {\em $l$-truncated right order} on $\m{F}(k)$ if $S = \sgr{S} \cap F_l(k)$, and for all $t \in F_{l-1}(k) \setc \{\e\}$, either $t \in S$ or $\iv{t} \in S$. 

\begin{Proposition}[Clay and Smith~\cite{CS09}]\label{p:claysmith}
 $S \subseteq F(k)$ extends to a right order of $\m{F}(k)$ if and only if $S$ extends to an $l$-truncated right order of $\m{F}(k)$ for some $l \in \N$.
\end{Proposition}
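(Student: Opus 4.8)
The plan is to prove the two directions separately, with the nontrivial content in the ``only if'' direction. The ``if'' direction should be straightforward: if $S$ extends to an $l$-truncated right order $S_l$ on $\m{F}(k)$ for some $l$, I would show $\sgr{S_l}$ is a partial right order (i.e. $\e \notin \sgr{S_l}$) whose positive cone contains a generating element of each pair $\{t, \iv t\}$ with $t \in F_{l-1}(k)$, and then appeal to $(\dagger)$ to extend it to a full right order. The key point is that the condition $S_l = \sgr{S_l} \cap F_l(k)$ forces $\e \notin \sgr{S_l}$ (since $\e \in F_l(k)$ but $\e \notin S_l$), so $\sgr{S_l}$ is already a partial right order, and every element of $\m{F}(k)$ of length $\le l-1$ is comparable to $\e$; an application of $(\dagger)$ to the remaining (longer) elements then finishes this direction.

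**The main direction.** For the ``only if'' direction, suppose $S$ extends to a right order $P$ of $\m{F}(k)$; I would like to produce an $l$ for which $S$ extends to an $l$-truncated right order. The natural candidate is $S_l := P \cap F_l(k)$ for a suitably large $l$, and the two conditions to check are (i) $S_l = \sgr{S_l} \cap F_l(k)$ and (ii) $t \in S_l$ or $\iv t \in S_l$ for all $t \in F_{l-1}(k)\setminus\{\e\}$. Condition (ii) is automatic for \emph{every} $l$, since $P$ is a right order and $|t| \le l-1 < l$ implies $t, \iv t \in F_l(k)$. The content is therefore entirely in (i), and only the inclusion $\sgr{S_l} \cap F_l(k) \subseteq S_l$ is nontrivial (the reverse inclusion is immediate since $S_l \subseteq P = \sgr{P} \supseteq \sgr{S_l}$ and $P \cap F_l(k) = S_l$). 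So I must find $l$ such that: whenever $a_1 \cdots a_m \in P$ with each $a_i \in P \cap F_l(k)$ and the reduced product $a_1\cdots a_m$ has length $\le l$, then already $a_1 \cdots a_m \in P$ — which it is — the issue is rather that I need $\sgr{S_l}$ to capture all short positive elements, i.e. I need that a short positive element of $P$ can be written as a product of short positive elements of $P$.

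**Where the work is.** The crux, then, is: does there exist $l$ such that every $t \in P$ with $|t| \le l$ is a product of elements of $P \cap F_l(k)$? I expect this to follow from a compactness/finiteness argument rather than from any structure theory. One clean route: fix $l_0$ and consider the finite set $F_{l_0}(k) \cap P$; for a right order $P$, I would like to bootstrap $l$ upward so that the finitely many elements of $F_{l_0}(k) \cap P$ each admit a decomposition into shorter positive pieces, terminating at positive generators $x_i^{\pm 1} \in P$. This is essentially the statement that $P$, restricted to a ball, is generated by the positive elements it contains, which can be argued by induction on length: if $t \in P$ with $|t| = j > 1$, write $t = t' x_i^{\delta}$ with $|t'| = j-1$; if $x_i^\delta \in P$ we recurse on $t'$ (note $t' = t (x_i^\delta)^{-1}$ need not be in $P$!), so the naive induction fails and one must be more careful — this delicacy, finding the right decomposition that stays inside $P$, is the main obstacle. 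The fix is to use that $P$ together with the dual order's positive cone exhausts $F(k)\setminus\{\e\}$: if $t = t' x_i^\delta \in P$ and $t' \notin P$, then $\iv{t'} \in P$ and $x_i^{-\delta} = \iv{t'} t \cdots$ — one pushes through a case analysis on which prefixes lie in $P$, ultimately showing every element of a ball in $P$ decomposes, with an explicit bound on the required $l$ in terms of $l_0$ and $k$. I would present this as a lemma (``for every $l_0$ there is $l \ge l_0$ such that $P \cap F_{l_0}(k) \subseteq \sgr{P \cap F_l(k)}$''), prove it by induction on the sum of lengths, and then the proposition follows by taking $l_0$ large enough to contain a finite subset of $S$ witnessing membership via $(\dagger)$, with the $l$ from the lemma.
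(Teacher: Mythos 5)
The paper itself gives no proof of this proposition (it is imported from Clay and Smith), so your proposal has to stand on its own --- and it has the difficulty exactly inverted. For the ``only if'' direction your candidate $S_l = P \cap F_l(k)$ (with $l$ at least the maximal length of an element of the finite set $S$) works, but there is no crux to locate: the inclusion $S_l \subseteq \sgr{S_l} \cap F_l(k)$ is trivial, and the inclusion $\sgr{S_l} \cap F_l(k) \subseteq S_l$ is precisely what your own parenthetical establishes ($S_l \subseteq P$ and $P$ is a subsemigroup, so $\sgr{S_l} \subseteq P$, whence $\sgr{S_l} \cap F_l(k) \subseteq P \cap F_l(k) = S_l$). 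The question you then agonize over --- whether every short element of $P$ is a product of short elements of $P$ --- is vacuous: each $t \in P \cap F_l(k)$ is itself an element of $S_l$, hence a one-factor product in $\sgr{S_l}$. The ``delicate induction'' of your last paragraph solves a non-problem, and is in any case only sketched.

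The genuine gap is in the ``if'' direction, which you dismiss as straightforward. That $\e \notin \sgr{S_l}$ follows from $S_l = \sgr{S_l} \cap F_l(k)$ and $\e \notin S_l$ is fine, but ``an application of $(\dagger)$ to the remaining (longer) elements'' is not an argument: $(\dagger)$ requires you to produce, for \emph{every} finite list $a_1,\ldots,a_n$ of nontrivial elements of arbitrary length, signs with $\e \notin \sgr{S_l \cup \{a_1^{\de_1},\ldots,a_n^{\de_n}\}}$, and knowing that the signs of all elements of length less than $l$ are consistently fixed says nothing about how to sign the longer ones. For a general group a consistent assignment on a ball need not extend to a right order at all; that it does for free groups is where the entire content of the Clay--Smith result lies (and is the delicate point behind their corrigendum). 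It needs an actual construction --- for instance the dynamical realization underlying the Proposition of Section~\ref{s:3}: turn the linear order that $S_l$ induces on a ball of the Cayley graph into order-preserving bijections of $\R$ and read off a right order from the resulting action, taking care that the given $l$ may first have to be enlarged. Your proposal contains no version of this step.
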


\noindent 
The condition described above can be decided for finite $S$ as follows. Let $l$ be the maximal length of an element in $S$. Extend $S$ to the finite set $S^{*}$ obtained by adding $st$ whenever $s,t$ occur in the set constructed so far and $|st| \le l$. This ensures that $S^{*} = \sgr{S^{*}} \cap F_l(k)$. If $\e \in S^{*}$, then stop. Otherwise, given $t\in F_{l-1}(k) \setc \{\e\}$ such that $t \not \in S^{*}$ and $\iv{t} \not \in S^*$, add $t$ to $S^{*}$ to obtain $S_1$ and $\iv{t}$ to $S^{*}$ to obtain $S_2$, and repeat the process with these sets. This procedure eventually terminates because $F_l(k)$ is finite. Hence, this algorithm can be used to decide whether a finite subset of a finitely generated free group extends to a right order, resulting in a proof of Theorem~\ref{t:decidingextending}, and therefore also of Theorem~\ref{t:wordproblem}.

\begin{Example}
Consider $S = \{xx,yy,\iv{x}\iv{y}\} \subseteq F(2)$. By adding all products in $F_2(2)$ of members of $S$, we obtain
\[
S^{*}= \{xx,yy,\iv{x}\iv{y},x\iv{y},\iv{x}y,xy\}.
\]
We then consider all possible signs $\de$ for $x,y \in F_1(2)$. If we add $\iv{x}$ or $\iv{y}$ to $S^{*}$ and take products, then clearly, using $xx$ or $yy$, we obtain $\e$. Similarly, if we add $x$ and $y$ to $S^{*}$, then, taking products, using $\iv{x}\iv{y}$, we obtain $\e$. Hence we may conclude that $S$ does not extend to a right order of $\m{F}(2)$ and obtain
\[
\vty{LG} \models \e \le xx \lor yy \lor \iv{x}\,\iv{y}.
\]
Consider now $T = \{xx, xy,y\iv{x}\} \subseteq F(2)$. By adding all products in $F_2(2)$ of members of $T$, we obtain
\[
T^{*}= \{xx, xy,y\iv{x},yx,yy\}.
\]
We choose $x,y \in F_1(2)$ to be positive and obtain $\{xx, xy,y\iv{x},yx,yy,x,y\}$, a $2$-truncated right order of $\m{F}(2)$. Hence $T$ extends to a right order of $\m{F}(2)$ and
\[
\vty{LG} \not \models \e \le xx \lor xy \lor y\iv{x}.
\]
\end{Example}

We conclude this section by mentioning a topological result regarding right orders of free groups, and its interpretation in terms of validity in $\ell$-groups.  Note first that a right order $P$ is isolated in the space $\mathcal{RO}(\m{G})$ of right orders of a group $\m{G}$ mentioned in the introduction  if and only if it is the unique right order extending some finite subset of $G$. In~\cite{mccleary85} McCleary proved the following for the spaces of right orders of non-trivial finitely generated free groups.

\begin{Theorem}[McCleary~\cite{mccleary85}]\label{t:isolated}
The space of right orders of a non-trivial finitely generated free group has no isolated points.
\end{Theorem}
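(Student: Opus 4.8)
Write $\m{F}$ for the free group in question; we may take it to have rank $k \ge 2$, since a free group of rank $\le 1$ has a finite, hence discrete, space of right orders.

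\emph{Step 1: unwinding the topology.} The plan begins by reformulating ``isolated point''. Since $\mathcal{RO}(\m{F})$ carries the subspace topology from $2^{F}$, and since for a right order $P$ and $a \ne \e$ one has $a \notin P \iff \iv{a} \in P$, the sets $\{P' \in \mathcal{RO}(\m{F}) : S \subseteq P'\}$ with $S$ a finite subset of $F\setc\{\e\}$ form a basis, and such a set is a neighbourhood of $P$ exactly when $S \subseteq P$. Hence $P$ is isolated if and only if $P$ is the \emph{unique} right order extending some finite $S \subseteq P$, and it therefore suffices to prove: \emph{every finite $S \subseteq F$ that extends to a right order of $\m{F}$ extends to at least two.} For this, given such an $S$ with extension $P$, it is enough to exhibit some $g \in F\setc\{\e\}$ for which both $S\cup\{g\}$ and $S\cup\{\iv{g}\}$ extend to right orders (equivalently, by Theorem~\ref{t:mainrightorderthm}(b), both lie outside $\subrg{\m{F}}$): the two resulting extensions disagree on $g$, so they are distinct and both contain $S$.

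\emph{Step 2: the main tool, and the generic case.} The key device I would use is a lexicographic extension lemma: if $N \trianglelefteq G$, $P_N$ is a right order on $N$, and $\bar Q$ is a right order on $G/N$, then
\[
P_N \ \cup\ \{g \in G : gN \in \bar Q\}
\]
is the positive cone of a right order on $G$ whose restriction to $N$ is $P_N$; the verification is routine (the displayed set is a subsemigroup of $G$ omitting $\e$ that meets $\{a,\iv{a}\}$ for every $a \ne \e$). Now let $S = \{\fr{t}_1,\dots,\fr{t}_n\}$ extend to a right order $P$, and write $\bar{\fr{t}}_i \in \mathbb{Z}^k$ for the exponent-sum (abelianisation) vector of $\fr{t}_i$. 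Suppose first that $\bar{\fr{t}}_1,\dots,\bar{\fr{t}}_n$ do \emph{not} span $\mathbb{Q}^k$ --- which in particular holds whenever $n < k$. Choose $0 \ne v \in \mathbb{Z}^k$ orthogonal to every $\bar{\fr{t}}_i$ with some coordinate $v_j > 0$, and let $\phi \colon \m{F} \to \Z$ be the homomorphism with $\phi(x_m) = v_m$. Then $N := \ker\phi$ is normal and contains every $\fr{t}_i$, so $S$ extends to the right order $P \cap N$ of $N$; since $\m{F}/N \cong \Z$, applying the lemma once with the standard right order of $\Z$ and once with its reverse yields two right orders of $\m{F}$, one containing $S\cup\{x_j\}$ and the other containing $S\cup\{\iv{x_j}\}$. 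By Step~1 this disposes of this case.

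\emph{Step 3: the remaining case, and the main obstacle.} It remains to treat $S$ whose abelianisations span $\mathbb{Q}^k$ (so $n \ge k$; for instance $\{x_1,\dots,x_k\} \subseteq S$): here no nonzero homomorphism $\m{F} \to \Z$ annihilates $S$ and the lexicographic trick does not apply directly. This is the heart of the theorem, and exactly where $k \ge 2$ is indispensable --- for $\m{F}(1) = \Z$ no such $g$ exists, and correspondingly $\mathcal{RO}(\Z)$ is discrete. Here the plan is to pass to a faithful action on the line: realise $\m{F}$ as a group of order-preserving bijections of $\langle \R, \le \rangle$ (for instance via a representation of the kind used in Section~\ref{s:3}, or via Holland's embedding theorem) with a basepoint $\xi$ such that $g > \e \iff g(\xi) > \xi$, so that ``$S \subseteq P$'' records the finitely many open conditions $\fr{t}_i(\xi) > \xi$. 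One then perturbs the images of $x_1,\dots,x_k$ --- an operation subject to no constraint, $\m{F}$ being free --- keeping the perturbation small enough that the conditions $\fr{t}_i(\xi) > \xi$ survive, while arranging that the behaviour near $\xi$ reroutes some element far from $\xi$, flipping its sign and hence changing the associated right order. The main obstacle is making this last step rigorous: one must ensure that such a perturbation genuinely changes the right order rather than merely conjugating it. This is in essence McCleary's argument~\cite{mccleary85}, exploiting the abundance of highly transitive actions of non-abelian free groups on the line; alternatively, one may deduce the result from the fact that the space of right orders of a free product of two non-trivial right-orderable groups has no isolated points, together with $\m{F}(k) \cong \Z \ast \m{F}(k-1)$.
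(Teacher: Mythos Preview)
The paper does not supply its own proof of this theorem: it is quoted from McCleary~\cite{mccleary85}, and the only argument the paper indicates is the closing remark of Section~\ref{s:3} --- by Proposition~\ref{p:claysmith} an isolated right order of $\m{F}(k)$ ($k\ge 2$) would be finitely generated as a semigroup, while Kielak~\cite{kie15} proves that no right order of $\m{F}(k)$ is.

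Your Steps~1 and~2 are correct, but Step~2 does not buy a genuine reduction: any finite $S \subseteq P$ can be enlarged within $P$ to one containing $x_1^{\pm 1},\dots,x_k^{\pm 1}$, whose abelianisations span $\mathbb{Q}^k$, so the entire content of the theorem already lies in your Step~3. There you do not give an argument --- you sketch the idea of perturbing a faithful action on $\R$, concede that ``the main obstacle is making this last step rigorous'', and then defer to McCleary's original proof or to a free-product theorem. That is the gap: the proposal ultimately reduces to citing the result you are asked to prove (or an equally nontrivial external one). If you want a route closer to what the paper hints at, show that an isolated right order is determined by some $l$-truncated right order and hence is finitely generated as a semigroup (this is where Proposition~\ref{p:claysmith} enters), and then invoke Kielak's theorem; that approach avoids both your case split and the dynamical perturbation.
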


\noindent
By Theorem~\ref{t:secondmainright}, we obtain the following feature of validity in $\ell$-groups.

\begin{Corollary}
The following are equivalent for $t_1,\ldots,t_n \in T(k)$:

\begin{newlist}

\item[\rm (1)]	$\vty{LG} \models \e \le t_1 \lor \dots \lor t_n$;

\item[\rm (2)] $\vty{LG} \models \e \le t_1 \lor \dots \lor t_n\lor s\,\text{ or }\,
\vty{LG} \models \e \le t_1 \lor \dots \lor t_n\lor \iv{s}$\, for all $s \in T(k)$.

\end{newlist}
\end{Corollary}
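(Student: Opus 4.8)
The plan is to read the equivalence off from Theorem~\ref{t:secondmainright} together with McCleary's theorem (Theorem~\ref{t:isolated}), passing freely between the two sides via Theorem~\ref{t:secondmainright}: the statement $\vty{LG} \models \e \le t_1 \lor \dots \lor t_n$ says exactly that $\{\fr{t}_1,\ldots,\fr{t}_n\}$ does not extend to a right order of $\m{F}(k)$, and for $s \in T(k)$ the statement $\vty{LG} \models \e \le t_1 \lor \dots \lor t_n \lor s$ (respectively, with $\iv{s}$ in place of $s$) says that $\{\fr{t}_1,\ldots,\fr{t}_n,\fr{s}\}$ (respectively, $\{\fr{t}_1,\ldots,\fr{t}_n,\iv{\fr{s}}\}$) does not extend to a right order of $\m{F}(k)$. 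As a harmless reduction I would first note that we may assume $k \ge 1$: for $k = 0$ the group $\m{F}(0)$ is trivial, every element of $T(0)$ equals $\e$ in $\m{F}(0)$, and both (1) and (2) reduce to the valid inequation $\e \le \e$.

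The direction (1)~$\Rightarrow$~(2) requires no ordering theory: since $x \le x \lor y$ holds in every $\ell$-group, validity of $\e \le t_1 \lor \dots \lor t_n$ forces validity of both $\e \le t_1 \lor \dots \lor t_n \lor s$ and $\e \le t_1 \lor \dots \lor t_n \lor \iv{s}$, so both disjuncts of (2) in fact hold for every $s \in T(k)$.

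For (2)~$\Rightarrow$~(1) I would argue by contraposition. Assume $\vty{LG} \not\models \e \le t_1 \lor \dots \lor t_n$. By Theorem~\ref{t:secondmainright} there is a right order $P$ of $\m{F}(k)$ with $\{\fr{t}_1,\ldots,\fr{t}_n\} \subseteq P$. Since $\m{F}(k)$ is a non-trivial finitely generated free group, Theorem~\ref{t:isolated} tells us that $P$ is not an isolated point of $\mathcal{RO}(\m{F}(k))$, and so, by the characterization of isolated points recorded just before Theorem~\ref{t:isolated}, $P$ is not the unique right order of $\m{F}(k)$ extending the finite set $\{\fr{t}_1,\ldots,\fr{t}_n\}$. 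Hence there is a right order $P' \ne P$ of $\m{F}(k)$ with $\{\fr{t}_1,\ldots,\fr{t}_n\} \subseteq P'$. As $P$ and $P'$ are distinct total right orders, some $a \in F(k) \setc \{\e\}$ satisfies $a \in P$ and $\iv{a} \in P'$ (if the symmetric difference of $P$ and $P'$ is witnessed by an element of $P' \setc P$, take $a$ to be its inverse). Choosing $s \in T(k)$ with $\fr{s} = a$, the sets $\{\fr{t}_1,\ldots,\fr{t}_n,\fr{s}\} \subseteq P$ and $\{\fr{t}_1,\ldots,\fr{t}_n,\iv{\fr{s}}\} \subseteq P'$ each extend to a right order of $\m{F}(k)$, so by Theorem~\ref{t:secondmainright} neither $\e \le t_1 \lor \dots \lor t_n \lor s$ nor $\e \le t_1 \lor \dots \lor t_n \lor \iv{s}$ is valid in $\vty{LG}$; that is, (2) fails.

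I do not expect a genuine obstacle here; the content of the corollary is carried entirely by Theorems~\ref{t:secondmainright} and~\ref{t:isolated}. The only points needing a little care are invoking the ``isolated if and only if unique right order extending some finite subset'' characterization correctly — so that non-isolation of $P$ genuinely yields a \emph{second} right order $P'$ that still contains $\{\fr{t}_1,\ldots,\fr{t}_n\}$ — and the small bookkeeping extracting from $P \ne P'$ an element $a$ with $a \in P$ and $\iv{a} \in P'$.
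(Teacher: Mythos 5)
Your proof is correct and follows essentially the same route as the paper: direction (1)~$\Rightarrow$~(2) is immediate, and (2)~$\Rightarrow$~(1) is obtained by contraposition from Theorem~\ref{t:secondmainright} together with McCleary's theorem that $\mathcal{RO}(\m{F}(k))$ has no isolated points. You merely spell out the step the paper leaves implicit --- extracting from non-isolation a second right order $P'$ extending $\{\fr{t}_1,\ldots,\fr{t}_n\}$ and then an element $a$ with $a \in P$ and $\iv{a} \in P'$ --- and this unpacking is accurate.
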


\begin{proof}
(1) $\Rightarrow$ (2) is immediate.  For (2) $\Rightarrow$ (1),  suppose that $\vty{LG} \not\models \e \le t_1 \lor \dots \lor \,t_n$. By Theorem~\ref{t:secondmainright}, the set $\{t_1, \ldots, t_n\}$ extends to a right order of $\m{F}(k)$. But, by Theorem~\ref{t:isolated}, the space $\mathcal{RO}(\m{F}(k))$ has no isolated points, so there exists $s \in T(k)$ such that both $\{t_1, \ldots, t_n, s\}$ and $\{t_1, \ldots, t_n, \iv{s}\}$ extend to right orders of $\m{F}(k)$. Theorem~\ref{t:secondmainright} then yields $\vty{LG} \not\models \e \le t_1 \lor \dots \lor t_n\lor s$ and $\vty{LG} \not\models \e \le t_1 \lor \dots \lor t_n\lor \iv{s}$ as required.
\end{proof}

\noindent
Let us remark finally that Proposition~\ref{p:claysmith} can be used to show that every isolated point of $\mathcal{RO}(\m{F}(k))$  ($k \ge 2$) is finitely generated as a semigroup, which, together with a result of Kielak~\cite{kie15} that {\em no} right order of $\m{F}(k)$ is finitely generated as a semigroup, yields an alternative proof of Theorem~\ref{t:isolated} (see~\cite{deroin2014groups} for details). 


\section{Ordering relatively free groups and validity in ordered groups}\label{s:4}

We turn our attention in this section to orders on (relatively free) groups and validity of equations in corresponding classes of ordered groups (equivalently, varieties of representable $\ell$-groups). We begin by providing a characterization of subsets of a group that extend to orders. Since the results and proofs are very similar to those presented for right orders in Section~\ref{s:2}, we confine ourselves here to presenting the main ingredients of the approach,  pointing out only the most significant differences. 

Let us again fix a group $\m{G}$ with neutral element $\e$ and denote by $\nsgr{S}$ the normal subsemigroup of $\m{G}$ generated by $S\subseteq G$. Clearly, $\nsgr{S}$ is a partial order of $\m{G}$ if and only if $\e \not \in \nsgr{S}$. As in the case of right orders, the following characterization of subsets of $G$ that extend to orders of $\m{G}$ is established by a straightforward application of Zorn's lemma (see~\cite{Fuc63}):

\begin{newlist}
\item[$(\ddagger)$]
 $S \subseteq G$ extends to an order of $\m{G}$ if and only if for all $a_1,\ldots,a_n \in G \setc\{\e\}$, there exist $\de_1,\ldots,\de_n \in \{-1,1\}$ such that $\e \not \in \nsgr{S \cup \{a_1^{\de_1},\ldots,a_n^{\de_n}\}}$.
\end{newlist}

\noindent
Our alternative characterization (similar again to those obtained in~\cite{OM52} and~\cite{Con59}) supplements the characterization for right orders with an extra condition to take care of normality. We define inductively for $n \in \N$,
\[
\begin{array}{rcl}
\subgen{\m{G}}{0} & = & \{S \subseteq G \mid S \cap S^{-1} \neq \emptyset\};\\[.1in]
\subgen{\m{G}}{n+1} & = & \subgen{\m{G}}{n}  \cup \{T \cup \{ab\} \mid T \cup \{a\}, T \cup \{b\} \in \subgen{\m{G}}{n} \\[.05in]
& & \hspace*{3.75cm} \text{ or }\, T \cup \{ba\} \in \subgen{\m{G}}{n}\};\\[.1in]
\subg{\m{G}} & = & \bigcup_{n \in \N} \subgen{\m{G}}{n}.
\end{array}
\]
It follows that $\subg{\m{G}} \subseteq \mathcal{P}(G)$ is the smallest set containing $\subgen{\m{G}}{0}$ with the properties that if $T \cup \{a\}, T \cup \{b\}  \in \subg{\m{G}}$, then $T \cup \{ab\} \in \subg{\m{G}}$, and if $T \cup \{ba\}  \in \subg{\m{G}}$, then $T \cup \{ab\} \in \subg{\m{G}}$.

\begin{Example}
In the free group $\m{F}(2)$ generated by $x$ and $y$, clearly $\{x, \iv{x}\} \in \subgen{\m{F}(2)}{0}$, and hence $\{x, y\iv{x}\iv{y}\} \in \subgen{\m{F}(2)}{1}$. This corresponds to the fact that $\{x,y\iv{x}\iv{y}\}$ does not extend to an order on  $\m{F}(2)$ and also the fact that the inequation $\e \le x \lor y\iv{x}\iv{y}$ is valid in all o-groups (see Theorem~\ref{t:secondmain}). Note, however, that $\{x, y\iv{x}\iv{y}\} \not \in \subrg{\m{F}(2)}$, reflecting the fact that  $\{x,y\iv{x}\iv{y}\}$ does extend to a right order on  $\m{F}(2)$ and  the fact that $\e \le x \lor y\iv{x}\iv{y}$ is not  valid in all $\ell$-groups (see Theorem~\ref{t:secondmainright}).
\end{Example}

\begin{Example}
In the fundamental group $\m{K}$ of the Klein bottle (see Example~\ref{e:klein}), $\{y, xy\iv{x}\} \in  \subgen{\m{K}}{0}$ and hence $\{y\} \in  \subgen{\m{K}}{1}$. This corresponds to the fact that $\m{K}$ is not orderable (Theorem~\ref{t:mainorderthm}).
\end{Example}

\noindent
The proof of the following theorem proceeds similarly to the proof of Theorem~\ref{t:mainrightorderthm}, using condition $(\ddagger)$ to establish the left-to-right direction of (b), and  normal subsemigroups of $\m{G}$ extending $S \subseteq G$ to establish analogues of Lemmas~\ref{l:starro} and~\ref{l:rextensionlemma}.

\begin{Theorem}\label{t:mainorderthm} \ 
\begin{newlist}
\item[\rm (a)]
A group $\m{G}$ is orderable if and only if  $\{a\} \not\in \subg{\m{G}}$ for all $a \in G \setc \{\e\}$.
\item[\rm (b)]
If a group $\m{G}$ is orderable, then  $S \subseteq G$ extends to an order of $\m{G}$ if and only if  $S\not\in\subg{\m{G}}$.
\end{newlist}
\end{Theorem}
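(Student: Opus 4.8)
The plan is to mirror the development of Theorem~\ref{t:mainrightorderthm} step by step, replacing subsemigroups by normal subsemigroups throughout and keeping track of the extra closure condition ($T\cup\{ba\}\in\subg{\m{G}}$ implies $T\cup\{ab\}\in\subg{\m{G}}$). First I would record the basic closure properties of $\subg{\m{G}}$ analogous to Lemma~\ref{l:derivablerules}: namely $S\cup\{\e\}\in\subg{\m{G}}$; monotonicity $S\in\subg{\m{G}}$ implies $S\cup T\in\subg{\m{G}}$; the finiteness property; the ``decomposition'' rule that $S\cup\{ab\}\in\subg{\m{G}}$ implies $S\cup\{a,b\}\in\subg{\m{G}}$ (proved exactly as before, via $b=\iv a ab$ and $\{a,\iv a\}\in\subg{\m{G}}$); and, crucially, a new conjugation-closure property: if $S\in\subg{\m{G}}$ then $cSc^{-1}\in\subg{\m{G}}$ for every $c\in G$ — or at least that $S\in\subg{\m{G}}$ iff $\nsgr{S}\in\subg{\m{G}}$. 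Conjugation-closure should follow from the rule $T\cup\{ba\}\in\subg{\m{G}}\Rightarrow T\cup\{ab\}\in\subg{\m{G}}$ applied with a suitable choice of $a,b$ (taking $a=c^{-1}$, $b=cs$ so that $ab=s$ and $ba=c^{-1}sc$), together with monotonicity and decomposition; this is the main place the new condition is used and is where I expect the bookkeeping to be most delicate.

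Next I would prove the soundness direction, the analogue of Lemma~\ref{l:rsound}: if $S\in\subg{\m{G}}$ then $S$ does not extend to an order of $\m{G}$. This goes by induction on the level $k$ with $S\in\subgen{\m{G}}{k}$, using condition $(\ddagger)$ in place of $(\dagger)$ and $\nsgr{-}$ in place of $\sgr{-}$. The base case and the ``$ab$ from $a$ and $b$'' inductive step are verbatim as in Lemma~\ref{l:rsound}. The genuinely new case is: $S=T\cup\{ab\}\in\subgen{\m{G}}{k+1}$ because $T\cup\{ba\}\in\subgen{\m{G}}{k}$. Here I would apply the induction hypothesis to $T\cup\{ba\}$ to get witnesses $c_1,\dots,c_n$ with $\e\in\nsgr{T\cup\{ba,c_1^{\de_1},\dots,c_n^{\de_n}\}}$ for all sign choices, and then observe that $ab=\iv a (ba) a$ is a conjugate of $ba$, so $ba\in\nsgr{T\cup\{ab,\dots\}}$ and hence $\e\in\nsgr{T\cup\{ab,c_1^{\de_1},\dots,c_n^{\de_n}\}}$ — no new witnesses needed. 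This uses exactly that $\nsgr{-}$ is closed under conjugation.

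For the converse I would establish the analogues of Lemmas~\ref{l:starro} and~\ref{l:rextensionlemma}. For the analogue of Lemma~\ref{l:starro}: if $S\not\in\subg{\m{G}}$, a Zorn's lemma argument on the poset of \emph{normal} subsemigroups of $\m{G}$ extending $S$ that are not in $\subg{\m{G}}$ yields a maximal such $T$; one then checks $G\setc T$ is a subsemigroup exactly as before (if $b,c\notin T$ then $\nsgr{T\cup\{b\}},\nsgr{T\cup\{c\}}$ properly extend $T$ so lie in $\subg{\m{G}}$, hence $T\cup\{bc\}\in\subg{\m{G}}$, forcing $bc\notin T$) — here one needs that $\nsgr{T\cup\{b\}}\in\subg{\m{G}}$ follows from $T\cup\{b\}\in\subg{\m{G}}$, which is the $\nsgr{-}$-closure property. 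For the analogue of Lemma~\ref{l:rextensionlemma}, one runs the same Zorn argument over normal subsemigroups $T\supseteq S$ with $\e\notin T$ and $G\setc T$ a subsemigroup, obtains a maximal $P$, and shows $P$ is an order: if $a,\iv a\notin P$ for some $a\neq\e$, use the analogue of Lemma~\ref{l:starro} with $\{a\}\not\in\subg{\m{G}}$ to get a normal $T_a\ni a$ not in $\subg{\m{G}}$ with $G\setc T_a$ a subsemigroup, and form $P^*=P\cup\{b\in T_a\mid b,\iv b\notin P\}$. The verifications that $P^*$ and $G\setc P^*$ are subsemigroups are essentially those in Lemma~\ref{l:rextensionlemma}; the one additional point is that $P^*$ is \emph{normal}, which follows because $P$, $T_a$, and the property ``$b,\iv b\notin P$'' are all conjugation-invariant. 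Finally, Theorem~\ref{t:mainorderthm} itself drops out: part~(a) by applying the extension lemma with $S=\emptyset$ in one direction and soundness applied to $\{a\}$ in the other, and part~(b) by combining soundness with the extension lemma, exactly as in the proof of Theorem~\ref{t:mainrightorderthm}. The main obstacle, as noted, is getting the conjugation-closure of $\subg{\m{G}}$ (equivalently, the equivalence $S\in\subg{\m{G}}\Leftrightarrow\nsgr{S}\in\subg{\m{G}}$) cleanly from the defining rules, since every subsequent step leans on it.
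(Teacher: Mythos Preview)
Your proposal is correct and follows precisely the route the paper indicates: replace subsemigroups by normal subsemigroups, use $(\ddagger)$ in place of $(\dagger)$ for the soundness direction, and rerun the Zorn-style arguments of Lemmas~\ref{l:starro} and~\ref{l:rextensionlemma} in the normal setting. Your identification of the conjugation-closure property $S\in\subg{\m{G}}\Leftrightarrow\nsgr{S}\in\subg{\m{G}}$ (derived from the new rule $T\cup\{ba\}\in\subg{\m{G}}\Rightarrow T\cup\{ab\}\in\subg{\m{G}}$) as the key additional ingredient, and the extra check that $P^*$ is normal, are exactly the points where the argument differs from the right-order case.
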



We now establish a correspondence between subsets of relatively free groups of a variety of groups that extend to orders, and the valid  $\ell$-group equations of a corresponding class of ordered groups. Let $\vty{V}$ be a variety of groups, and let $\vty{K}_\vty{V}$  be the class of all o-groups that have a group reduct in $\vty{V}$. In particular, if $\vty{V}$ is the variety of all groups, then $\vty{K}_\vty{V}$ is the class of all o-groups. We consider a relatively free group $\m{F}_\vty{V}(X)$ of $\vty{V}$ over some non-empty set $X$ of generators, writing again $\fr{t}$ to denote the reduced term in $\m{F}_\vty{V}(X)$ corresponding to $t \in T(X)$.

\begin{Theorem}\label{t:secondmain}
If $\m{F}_\vty{V}(X)$ is orderable, then the following are equivalent for all $t_1,\ldots,t_n \in T(X)$:

\begin{newlist}

\item[\rm (1)]		$\{\fr{t}_1,\ldots,\fr{t}_n\}$ does not extend to an order of $\m{F}_\vty{V}(X)$;

\item[\rm (2)]		$\{\fr{t}_1,\ldots,\fr{t}_n\}  \in \subg{\m{F}_\vty{V}(X)}$;

\item[\rm (3)]		$\vty{K}_\vty{V} \models \e \le t_1 \lor \dots \lor t_n$.

\end{newlist}
\end{Theorem}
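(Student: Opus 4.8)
The plan is to mirror the proof of Theorem~\ref{t:secondmainright}, replacing $\subrg{\cdot}$ by $\subg{\cdot}$ and right orders by orders throughout; the one essential new ingredient will be a quasi-equation expressing that conjugation preserves positivity, valid in every o-group but not in arbitrary $\ell$-groups. The equivalence (1)~$\Leftrightarrow$~(2) is then immediate: it is exactly Theorem~\ref{t:mainorderthm}~(b), available because $\m{F}_\vty{V}(X)$ is assumed orderable.

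For (2)~$\Rightarrow$~(3) I would induct on $k \in \N$ with $\{\fr{t}_1,\ldots,\fr{t}_n\} \in \subgen{\m{F}_\vty{V}(X)}{k}$. The base case is as before: if $\fr{t}_i = \iv{\fr{t}_j}$ in $\m{F}_\vty{V}(X)$ for some $i,j$, then $\vty{K}_\vty{V} \models t_i \eq \iv{t_j}$, so $\vty{K}_\vty{V} \models \e \le t_1 \lor \dots \lor t_n$. In the inductive step a set enters $\subgen{\m{F}_\vty{V}(X)}{k+1}$ for one of two reasons. If $t_n = uv$ because $\{\fr{t}_1,\ldots,\fr{t}_{n-1},\fr{u}\}$ and $\{\fr{t}_1,\ldots,\fr{t}_{n-1},\fr{v}\}$ both lie in $\subgen{\m{F}_\vty{V}(X)}{k}$, then the induction hypothesis together with the quasi-equation $(\e \le x\lor y)\,\&\,(\e \le x\lor z)\Rightarrow (\e \le x\lor yz)$, valid in $\vty{LG}\supseteq\vty{K}_\vty{V}$, yields the conclusion exactly as in Theorem~\ref{t:secondmainright}. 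The genuinely new case is $t_n = uv$ with $\{\fr{t}_1,\ldots,\fr{t}_{n-1},\fr{vu}\} \in \subgen{\m{F}_\vty{V}(X)}{k}$: the induction hypothesis then gives $\vty{K}_\vty{V} \models \e \le t_1 \lor \dots \lor t_{n-1} \lor vu$, and since $uv = u(vu)\iv{u}$ is a conjugate of $vu$ it suffices to invoke the quasi-equation $(\e \le x\lor y)\Rightarrow(\e \le x\lor z y\iv{z})$. This last quasi-equation fails in general $\ell$-groups, but holds in every o-group --- in a chain, $\e \le x\lor y$ forces $\e \le x$ or $\e \le y$, and conjugation is an order-automorphism --- hence it holds throughout $\vty{K}_\vty{V}$ (and indeed throughout $\vty{RG}$). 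I expect this to be the only step that needs real thought.

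For (3)~$\Rightarrow$~(1) I would argue contrapositively, exploiting the fact that --- unlike a right order --- an order on a group already equips the group with the structure of an o-group, so no detour through an automorphism group is required. Suppose $\{\fr{t}_1,\ldots,\fr{t}_n\}$ lies in the positive cone of an order $\le$ of $\m{F}_\vty{V}(X)$, and let $\m{G}$ be the o-group whose group reduct is $\m{F}_\vty{V}(X)$ and whose (total) lattice order is the dual order $\le^{\partial}$; since its group reduct lies in $\vty{V}$, we have $\m{G}\in\vty{K}_\vty{V}$. Each $\fr{t}_i$ is strictly $\le^{\partial}$-negative, so the homomorphism $\varphi\colon \m{T}^\ell(X)\to\m{G}$ extending $x\mapsto\fr{x}$ sends $t_1 \lor \dots \lor t_n$ to the $\le^{\partial}$-largest of $\fr{t}_1,\ldots,\fr{t}_n$, which is strictly below $\e = \varphi(\e)$. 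Hence $\varphi(\e)\not\le^{\partial}\varphi(t_1 \lor \dots \lor t_n)$, which shows $\vty{K}_\vty{V}\not\models \e \le t_1 \lor \dots \lor t_n$, completing the proof.
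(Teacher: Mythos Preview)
Your proposal is correct and follows essentially the same route as the paper: the equivalence (1)~$\Leftrightarrow$~(2) via Theorem~\ref{t:mainorderthm}, the induction for (2)~$\Rightarrow$~(3) with the extra case handled by an o-group quasi-equation, and the contrapositive for (3)~$\Rightarrow$~(1) by reading $\m{F}_\vty{V}(X)$ with the dual order directly as an o-group in $\vty{K}_\vty{V}$. The only cosmetic difference is that you phrase the new quasi-equation as $(\e\le x\lor y)\Rightarrow(\e\le x\lor zy\iv{z})$ and instantiate it with $uv=u(vu)\iv{u}$, whereas the paper uses the equivalent form $(\e\le x\lor yz)\Rightarrow(\e\le x\lor zy)$; both express the same fact about o-groups and neither is more work than the other.
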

\begin{proof}
The equivalence (1) $\Leftrightarrow$ (2) follows immediately from Theorem~\ref{t:mainorderthm}.

We prove (2) $\Rightarrow$ (3) by induction on $k \in \N$ such that  $\{\fr{t}_1,\ldots,\fr{t}_n\} \in \subgen{\m{F}_\vty{V}(X)}{k}$. The base case follows exactly as in the proof of Theorem~\ref{t:secondmainright}, as does the inductive step for the case where $\{\fr{t}_1,\ldots,\fr{t}_{n-1},\fr{uv}\} \in \subgen{\m{F}(X)}{k+1}$ results from $\{\fr{t}_1,\ldots,\fr{t}_{n-1},\fr{u}\} \in \subgen{\m{F}(X)}{k}$ and $\{\fr{t}_1,\ldots,\fr{t}_{n-1},\fr{v}\} \in \subgen{\m{F}(X)}{k}$. Suppose now that $\{\fr{t}_1,\ldots,\fr{t}_{n-1},\fr{u}\fr{v}\} \in \subgen{\m{F}_\vty{V}(X)}{k+1}$ because $\{\fr{t}_1,\ldots,\fr{t}_{n-1},\fr{v}\fr{u}\} \in \subgen{\m{F}_\vty{V}(X)}{k}$. By the induction hypothesis, 
\[
\vty{K}_\vty{V} \models \e \le t_1 \lor \dots \lor t_{n-1} \lor vu,
\] 
and hence, since the quasi-equation $(\e \le x\lor yz)\Rightarrow (\e \le x\lor zy)$ is valid in all o-groups,  $\vty{K}_\vty{V} \models \e \le t_1 \lor \dots \lor t_{n-1} \lor uv$. 

We prove (3) $\Rightarrow$ (1) by contraposition. Suppose that $\{\fr{t}_1,\ldots,\fr{t}_n\}$ extends to an order $\le$ of $\m{F}_\vty{V}(X)$. Then $\fr{t}_1,\ldots,\fr{t}_n$ are all negative with respect to the dual order $\le^\partial$ on $\m{F}_\vty{V}(X)$. Let $\m{L} \in \vty{K}_\vty{V}$ be the o-group with group reduct $\m{F}_\vty{V}(X)$ and order $\le^\partial$, and let $\varphi$ be the homomorphism from $\m{T}^\ell(X)$ to  $\m{L}$ defined by mapping each $x \in X$ to $\fr{x} \in F_\vty{V}(X)$. Then $\varphi(t)$ is $\fr{t} \in F_\vty{V}(X)$ for each group term $t$, and, since $\le^\partial$ is total, $\varphi(t_1 \lor \dots \lor t_n) = \varphi(t_i) = \fr{t}_i \in F_\vty{V}(X)$ for some $i \in \{1,\ldots,n\}$. 
But given that $\fr{t}_1,\ldots,\fr{t}_n$ are all negative, also 
\[
\varphi(t_1 \lor \dots \lor t_n) = \fr{t}_i <^\partial \fr{\e} = \varphi(\e).
\] 
So  $\vty{K}_\vty{V} \not \models \e \le t_1 \lor \dots \lor t_n$.
\end{proof}

\begin{Corollary}
If $\m{F}_\vty{V}(X)$ is orderable, then $ \fr{S} \subseteq F(X)$ extends to an order of $\m{F}_\vty{V}(X)$ if and only if $\vty{K}_\vty{V} \not \models \e \le t_1 \lor \dots \lor t_n\,$ for all $\{t_1,\ldots,t_n\} \subseteq S$.
\end{Corollary}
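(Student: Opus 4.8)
The plan is to derive this corollary from Theorem~\ref{t:secondmain} together with the compactness-type principle $(\ddagger)$, exactly paralleling the passage from Theorem~\ref{t:secondmainright} to its accompanying corollary for right orders. First I would note that $(\ddagger)$ (or rather the simpler consequence of it recorded implicitly alongside: a subset of a group extends to an order iff each of its finite subsets does) reduces the orderability of $\fr{S}$ to a statement about all finite $\{t_1,\ldots,t_n\}\subseteq S$. Concretely, $\fr{S}$ extends to an order of $\m{F}_\vty{V}(X)$ if and only if every finite subset $\{\fr{t}_1,\ldots,\fr{t}_n\}\subseteq \fr{S}$ extends to an order of $\m{F}_\vty{V}(X)$.

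Next I would apply Theorem~\ref{t:secondmain}, which is available since $\m{F}_\vty{V}(X)$ is assumed orderable: for each finite $\{t_1,\ldots,t_n\}\subseteq S$, the set $\{\fr{t}_1,\ldots,\fr{t}_n\}$ extends to an order of $\m{F}_\vty{V}(X)$ if and only if $\vty{K}_\vty{V}\not\models \e\le t_1\lor\dots\lor t_n$ (this is the negation of the equivalence (1)$\Leftrightarrow$(3) there). Chaining the two equivalences gives: $\fr{S}$ extends to an order of $\m{F}_\vty{V}(X)$ iff $\vty{K}_\vty{V}\not\models \e\le t_1\lor\dots\lor t_n$ for all $\{t_1,\ldots,t_n\}\subseteq S$, which is precisely the claim.

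The only point needing a word of care — and the closest thing to an obstacle, though it is a very mild one — is making sure $(\ddagger)$ genuinely yields the finite-character statement I use in the first step: $(\ddagger)$ is phrased as a condition quantifying over finite tuples $a_1,\ldots,a_n$, and one checks directly that $S$ satisfies this condition iff every finite subset of $S$ does, since the condition only ever refers to finitely many elements of $S$ at a time (through $\nsgr{S\cup\{\cdots\}}$, whose elements are words in finitely many generators). Everything else is a routine concatenation of the two displayed equivalences, so I would keep the write-up to two or three sentences. One should also state the obvious symmetric remark that the result specializes, when $\vty{V}$ is the variety of all groups, to a characterization of orderability of arbitrary free groups in terms of validity in the class of all o-groups, though this needs no separate argument.
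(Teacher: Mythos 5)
Your argument is correct and is exactly the route the paper takes: the corollary is obtained by combining the finite-character consequence of $(\ddagger)$ (a set extends to an order if and only if each of its finite subsets does) with the equivalence (1) $\Leftrightarrow$ (3) of Theorem~\ref{t:secondmain}, just as the right-order corollary is obtained from $(\dagger)$ and Theorem~\ref{t:secondmainright}. Your added check that the condition in $(\ddagger)$ has finite character is sound and is the only point requiring any care.
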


We may also view these results from the opposite direction, starting with some particular variety of $\ell$-groups rather than a variety of groups. Let $\vty{L}$ be a variety of  representable $\ell$-groups defined relative to $\vty{RG}$ by a set $\Si \subseteq (T(X))^{2}$ of group equations, and let $\vty{V}$ be the variety of groups defined by $\Si$. If the relatively free group $\m{F}_\vty{V}(X)$  is orderable, then Theorem~\ref{t:secondmain} implies that  $\{\fr{t}_1,\dots,\fr{t}_n\}\subseteq T(X)$ extends to an order of $\m{F}_{\vty{V}}(X)$ if and only if $\vty{L} \not\models \e \le t_1\lor\dots\lor t_n$. Note also that in this case, if an $\ell$-group equation fails in $\vty{L}$, then it fails in $\m{F}_\vty{V}(X)$ equipped with some order. So, if  the relatively free groups of $\vty{L}$ are orderable, then $\vty{L}$ is generated as a variety by the class of o-groups with group reducts of $\m{F}_\vty{V}(X)$. In particular, the variety of representable $\ell$-groups is generated as a variety by the class of o-groups whose group reducts are free groups. 


Finally, let us remark that some questions discussed in Section~\ref{s:3} for right orders and $\ell$-groups are still open for orders and representable $\ell$-groups. It follows from Theorem~\ref{t:secondmain} that the decidability of the word problem for free representable $\ell$-groups is equivalent to the problem of deciding whether a given finite set of elements of a free group extends to an order. Both of these problems are, as far as we know, still open. Also from a topological perspective, much less is known in the case of orders. The topological space $\mathcal{BO}(\m{G})$ of orders of a group $\m{G}$ consists of the set of normal subsemigroups $P$ such that $P \cup \iv{P} = G \setc \{\e\}$, equipped with the induced powerset topology, and forms a closed subspace of $\mathcal{RO}(\m{G})$. It is not known, however, whether the space $\mathcal{BO}(\m{F}(k))$ of orders of a (non-trivial) finitely generated free group has any isolated point. Equivalently, it is not known if for $t_1,\ldots,t_n \in T(k)$, whenever $\vty{RG} \models \e \le t_1 \lor \dots \lor t_n\lor s\,\text{ or }\, \vty{RG} \models \e \le t_1 \lor \dots \lor t_n\lor \iv{s}$\, for all $s \in T(k)$, then $\vty{RG} \models \e \le t_1 \lor \dots \lor t_n$.

\bibliographystyle{plain}

\end{document}